\definecolor{darkblue}{rgb}{0.0,0.0,0.65}
\definecolor{darkred}{rgb}{0.65,0.0,0.0}
 \definecolor{emph}{gray}{0.95}
\newcommand\bref[3][darkred]{%
    \begingroup%
    \hypersetup{linkcolor=#1}%
    \hyperlink{#2}{#3}%
    \endgroup}
\theoremstyle{plain}
\newtheorem{theorem}{Theorem}
\newtheorem{corollary}[theorem]{Corollary}
\newtheorem{proposition}{Proposition}
\newtheorem{lemma}[proposition]{Lemma}
\newtheorem*{claim}{Claim}
\newtheorem{remark}{Remark} 
\theoremstyle{definition}
\newtheorem{example}{Example}
\newtheorem{definition}{Definition}
\newtheoremstyle{named}%
    {}{}{\itshape}{}{\bfseries}{.}{.5em}{\thmnote{#3}}
\theoremstyle{named}
\newtheorem*{custom3}{Convenient setup}
\numberwithin{equation}{section}
\newcommand{\setup}{{\bref{setup}{{\sf \small principal alignment}}}}
\newcommand{\st}{_{\sf p}}
\newcommand{\QQ}{\mathbf{\Xi}}
\newcommand{\qq}{\xi}
\newcommand{\eps}{\epsilon}
\newcommand{\argmin}{\mathop{\rm argmin}}
\newcommand{\mini}{\mathop{\rm minimize}}
\newcommand{\NN}[2]{\mathcal{N}_{#1}(#2)}
\def\Tr{{\sf Tr}}
\newcommand{\R}{\mathbb{R}}  
\newcommand{\E}{\mathop{\mathbb{E}}}
\newcommand{\Gr}{\mathsf{Gr}} 
\newcommand{\col}{\mathsf{col}} 
\newcommand{\diag}{\mathsf{diag}}
\newcommand{\dc}{\mathbf{\Delta}}
\newcommand{\dcc}{\mathbf{\Delta_{{\sf crit}}}}
\newcommand{\dcn}{\bar{\mathbf{\Delta}} }
\newcommand{\dd}{\mathbf{\Delta}\st}
\newcommand{\darc}{\mathsf{d_{arc}}}
\newcommand{\dchor}{\mathsf{d_{chor}}}
\newcommand{\dproj}{\mathsf{d_{proj}}}
\newcommand{\norm}[1]{\left\|{#1}\right\|}
\newcommand{\fnorm}[1]{\left\|{#1}\right\|_{\sf{F}}}
\newcommand{\pnorm}[1]{\left\|{#1}\right\|_{\Omega}}
\newcommand{\nn}{\nonumber}
\newcommand{\vxxx} {\mathbf{x}\st}
\newcommand{\vyyy} {\mathbf{y}\st}
\newcommand{\vze}{\mathbf{0}}
\newcommand{\vx}{\mathbf{x}}
\newcommand{\vy}{\mathbf{y}}
\newcommand{\vu}{\mathbf{u}}
\newcommand{\vv}{\mathbf{v}}
\newcommand{\vg}{\mathbf{g}}
\newcommand{\vla}{\boldsymbol{\lambda}}
\newcommand{\sig}{\sigma}
\newcommand{\smax}{\sigma_{{\sf max}}}
\newcommand{\smin}{\sigma_{{\sf min}}}
\newcommand{\VX}{\mathbf{X}}
\newcommand{\VY}{\mathbf{Y}}
\newcommand{\VYX}{\mathbf{Y}_{\mathbf{X}}}
\newcommand{\VU}{\mathbf{U}}
\newcommand{\VS}{\mathbf{\Sigma}}
\newcommand{\VV}{\mathbf{V}}
\newcommand{\VM}{\mathbf{M}} 
\newcommand{\VD}{\mathbf{D}} 
\newcommand{\VG}{\mathbf{G}} 
\newcommand{\VI}{\mathbf{I}} 
\newcommand{\VR}{\mathbf{R}} 
\newcommand{\VP}{\mathbf{P}}
\newcommand{\VQ}{\mathbf{Q}} 
\newcommand{\VA}{\mathbf{A}} 
\newcommand{\VB}{\mathbf{B}} 
\newcommand{\VZE}{\mathbf{O}} 
\newcommand{\VTH}{\mathbf{\Theta}}
\newcommand{\VPH}{\mathbf{\Phi}}
\newcommand{\VLA}{\mathbf{\Lambda}}
\newcommand{\pp}{\Omega}
\newcommand{\orth}{\mathsf{O}}
\newcommand{\T}[1]{{\sf T}_{#1}} 
\newcommand{\ff}{f_{\sf full}} 
\newcommand{\fp}{f }
\newcommand{\gf}{g_{\sf full}} 
\newcommand{\gp}{g}
\def\grad{\mathsf{grad}}
\def\hess{\mathsf{hess}}
\newcommand{\inp}[2]{\left\langle#1, #2 \right\rangle}
\newcommand{\inpp}[2]{\left\langle#1, #2 \right\rangle_{\Omega}}
\date{} 
\begin{document}

\title{Riemannian Perspective on Matrix Factorization}

\author[1]{Kwangjun Ahn%
\thanks{
 Email: {\tt kjahn@mit.edu}. This work was supported by graduate assistantship from the NSF Grant (CAREER: 1846088) and by Kwanjeong Educational Foundation.}%
}
\author[2]{Felipe Suarez%
\thanks{Email: {\tt felipesc@mit.edu}. This work was supported in part by a graduate assistantship from the NSF award IIS-1838071.}%
}
\affil[1]{Department of  Electrical Engineering and Computer Science, MIT}
\affil[2]{Department of Mathematics, MIT}
\maketitle
\begin{abstract}
We study the non-convex matrix factorization approach to matrix completion via Riemannian geometry. Based on an optimization formulation over a Grassmannian manifold, we characterize the landscape based on the notion of principal angles between subspaces. For the fully observed case, our results show that there is a region in which the cost is geodesically convex, and outside of which all critical points are strictly saddle. We empirically study the partially observed case based on our findings.
\end{abstract}

\section{Introduction}
 
Matrix completion is a classical  problem in machine learning and signal processing  that aims to recover an unknown low-rank matrix from only a few observed entries.
Ever since the pioneering work by~\cite{candes2009exact}, there have been a flurry of works solving matrix completion with guarantees. 
See  a survey by~\cite{candes2012survey} and the introduction of~\citep{ge2016matrix} for detailed information.

Among many approaches, one prominent approach widely used in practice is based on  matrix factorizations, \emph{\`{a} la}~\cite{burer2003nonlinear}.
Letting $M$ be the  $m\times n$ unknown matrix  of rank $r$, the  matrix factorization approach solves
\begin{align} \label{matrix_factorization}
    \mini_{\substack{\VX\in\R^{m\times r}\\ 
    \VY\in \R^{n\times r}}} \frac{1}{2}\sum_{(i,j): \text{ observed}}\left[(\VX\VY^\top)_{i,j} -\VM_{i,j}\right]^2.
\end{align}
Due to  explicit factorization, the matrix factorization approach is computationally advantageous, making it widely applicable in practice~\citep{koren2009bellkor}.

To complement its success in practice, there have been a great number of works trying to understand the matrix factorization approach \emph{theoretically}~\citep{keshavan2010matrix,jain2013low,hardt2014understanding,hardt2014fast,sun2015guaranteed,zhao2015nonconvex,chen2015fast,de2015global,ge2016matrix, bhojanapalli2016global,ge2017no,park2017non}.
As discussed in the seminal work by  \cite{sun2015guaranteed}, the main difficulty lies in the non-convex nature of the problem, arising from the problem symmetry.
For instance,  if a pair $(\VX^\star,\VY^\star)$ achieves the global optimum of \eqref{matrix_factorization}, then it follows from the symmetry that  $(\VX^\star \VR,\VY^\star(\VR^{-1})^\top)$ for any invertible $\VR$ also achieves the optimum.
In particular, the non-convexity of matrix factorization eludes conventional theoretical frameworks based on convex analysis.

In this work, we study the matrix factorization approach from a  Riemannian geometric perspective.
Inspired by previous works~\citep{dai2011subspace,boumal2011rtrmc,dai2012geometric, boumal2015low,pitaval2015convergence}, we consider a formulation of the matrix factorization approach as an optimization over a single Grassmannian.
We then investigate the Riemannian geometry of the cost function.
For the fully observed case, our main result offers a crisp characterization of the landscape  based on the  notion of   principal angles between subspaces.
More specifically, our result characterizes a region in which the cost function is geodesically convex and outside of  which the cost function has an escaping direction, i.e. a direction along which the second derivative is  negative. 
In particular, all  critical points outside this region are strictly saddle.
Lastly, we empirically study  the partially observed case and observe that the landscape resembles the fully observed case for the settings considered in previous works.

\subsection{Related work}

Over the last decade, there have been a myriad of works solving matrix completion via \emph{Riemannian optimization}\footnote{See a recent monograph \citep{boumal2020intromanifolds} for a gentle introduction on  optimization over Riemannian manifolds.}~\citep{edelman1998geometry, absil2004riemannian}.
Previous methodological contributions have employed various optimization methods, such as (stochastic) gradient descent~\citep{keshavan2009gradient,keshavan2010matrix,balzano2010online,mishra2012riemannian,mishra2014fixed}, conjugate gradient methods~\citep{vandereycken2013low,boumal2011rtrmc,mishra2012riemannian,boumal2015low, cambier2016robust}, trust-region methods~\citep{boumal2011rtrmc,mishra2012riemannian,mishra2014fixed,boumal2015low}, Newton's method~\citep{simonsson2010grassmann}, scaled gradient methods~\citep{ngo2012scaled}, and others.
Only a few works have studied the theoretical aspect of the Riemannian optimization approach. The closest result to ours can be found in~\citep{pitaval2015convergence} where they show the global convergence of a Riemannian gradient flow.
Another related result can be found in~\citep[\S 6]{keshavan2010matrix} where they study \emph{local} geometry of their formulation based on \emph{two} Grassmannian manifolds. 
Other works study formulations based on \emph{different} manifolds. 
For instance, \citep{wei2016guarantees} and \citep{hou2020fast} study the formulation based on the manifold of fixed rank matrices. 

Apart from matrix completion, over the last few years, there have been many works understanding Riemannian landscape  for other problems, including dictionary learning~\citep{sun2016complete,bai2018subgradient,zhu2019linearly,li2019weakly}, (robust) subspace recovery~\citep{maunu2019well,zhu2019linearly,li2019weakly}, matrix sensing~\citep{hou2020fast}, and point-set registration~\citep{bohorquez2020maximizing}.

\subsection{Setting and notations}

\paragraph{Setting.} We use bold lower-case letters (e.g. $\vu,\vv,\vx,\vy$) to denote vectors, and bold upper-case letters (e.g. $\VU,\VV,\VX,\VY$) to denote matrices, and reserve $\VZE$ for the zero  matrix. 
We assume that the ground truth matrix $\VM$ is a  $m\times n$ matrix ($m\leq n$) of rank $r$ whose  singular value decomposition is $\VU \VS \VV^\top$.
Let $\smin$  and $\smax$ be the smallest and largest  singular value, respectively. 
Let $\pp \subset [m]\times [n]$ be the subset of observed positions.
 For $\VA\in \R^{m\times n}$, $\VA_\pp$ is the $m\times n$ matrix defined as $(\VA_\pp)_{i,j}=\VA_{i,j}$ if $(i,j)\in \pp$ and 
 $(\VA_\pp)_{i,j}=0$ otherwise.
$\inpp{\VA}{\VB}$ denotes the inner product $\Tr[(\VA_\pp)^\top \VB_\pp]$ and 
$\pnorm{\VA}^2 := \inpp{\VA}{\VA}$.

\paragraph{Other notations.} 
The column space is denoted  $\col(\cdot)$.
The $m\times m$ identity matrix is denoted $\VI_{m}$.
For a function $h:\R\to \R$ and a diagonal matrix $\VD=\diag(d_1,d_2,\dots,d_r)$, $h(\VD):=\diag(h(d_1),h(d_2),\dots, h(d_r))$.

\section{Background on Grassmannian manifolds} \label{sec:background}

Throughout this paper, we consider a  Riemannian manifold on the space of subspaces called the Grassmannian manifold~\citep{grassmann1862ausdehnungslehre}.  
Here we provide a brief background. For more details, we refer the readers to the seminal works by~\cite{edelman1998geometry} and~\cite{absil2004riemannian}; see also \citep[Ch. 9]{boumal2020intromanifolds}.
 
\subsection{Riemannian geometry of Grassmannian manifolds}
\paragraph{Grassmannian manifold.}  
For $r \leq m$, the Grassmannian manifold  $\Gr(m,r)$ is the set of $r$-dimensional subspaces in $\R^m$.
We consider the canonical Euclidean embedding into $\R^{m\times r}$, in which each point is represented by the equivalence class
 \begin{align} \label{def:grassmann}
     [\VX] = \left\{\VX \VQ~:~ \VQ \in \orth(r)\right\} 
 \end{align}
 for $\VX\in\R^{m\times r}$ such that $\VX^\top \VX  = \VI_r$.
 An equivalent way of representing $\Gr(m,r)$ is via the   quotient $\orth(m)/(\orth(r) \times \orth(m-r))$, where $\orth(d)$ is the orthogonal group of $d\times d$ matrices~\citep{edelman1998geometry}. 
 \begin{remark}
From now on, we will often abuse notation and identify $\VX$ with the entire equivalence class $[\VX]$ whenever it is clear from the context.   
 \end{remark}

We now describe the Riemannian geometry of the Grassmann manifold.
Since the Grassmannian manifold is a quotient manifold, for concreteness, it is convenient to work with a specific representative of the equivalent class.
In the following,  we choose and fix a representative element $\VX$ for a point $[\VX]\in \Gr(m,r)$.

\paragraph{Tangent space.}  First, the tangent space  $\T{\VX}\Gr(m,r)$ at $\VX$ is given as 
\begin{align*}
    \T{\VX}\Gr(m,r):= \left\{\dc \in \R^{m\times r} ~:~ \VX^\top \dc =\VZE \right\}\,.
\end{align*}
For any direction $\VD\in \R^{m\times r}$, the projection of it onto the tangent space at $\VX$ is given as 
\begin{align*} 
(\VI_m -\VX\VX^\top) \VD \in \T{\VX}\Gr(m,r)\,.
\end{align*}

\paragraph{Metric.}The Riemannian metric on the Grassmann manifold is given as 
\begin{align*}
    \inp{\dc_1}{\dc_2} := \Tr(\dc_1^\top \dc_2 ) 
\end{align*}
for $\dc_1,\dc_2\in \T{\VX}\Gr(m,r)$.

\paragraph{Calculus.}Now, we discuss  Riemannian calculus on Grassmann manifolds.
For a function $f:\Gr(m,r)\to \R$, we define its gradient $\grad f$ to be the vector field such that
\begin{align*}
    \inp{\grad f(\VX)}{\dc} = D f(\VX)[\dc]
\end{align*}
 for any $\dc\in \T{\VX}\Gr(m,r)$,
where $ D f(\VX)[\dc]$ denotes the (Euclidean) directional derivative of $f$ along the direction $\dc$.
Moreover, the Hessian $\hess f$ is the quadratic form (or the $(0,2)$-tensor) such that for any $\dc\in \T{\VX}\Gr(m,r)$,
\begin{align*}
     \hess f(\VX)[\dc,\dc]
     = D(D f(\VX)[\dc])[\dc]. 
\end{align*}

\subsection{Principal angles, distances and geodesics} \label{sec:principal}

\paragraph{Principal angles.} In order to better understand Grassmannian geometry, we discuss  a well-established notion from linear algebra called the  \emph{principal angles} between subspaces~\citep{jordan1875essai}. 
From \eqref{def:grassmann}, one can interpret each point $\VX$ on $\Gr(m,r)$ as an equivalence class of orthonormal bases of $\col(\VX)$. 
In other words, one can canonically identify each point on $\Gr(m,r)$ with a $r$-dimensional subspace of $\R^m$.
Now having this identification, we make the following definition about the principal angles.
\begin{definition}[principal angles]
 For $[\VX],[\VY]$ on $\Gr(m,r)$, the principal angles between $[\VX]$ and $[\VY]$ are defined as the principal angles between the subspaces $\col(\VX)$ and $\col(\VY)$. 
 In other words, denoting the singular values of  $\VX^\top \VY$ by $\lambda_1\geq \dots \geq \lambda_r$, the principal angle matrix $\VTH\in \R^{r\times r}$ is the diagonal matrix whose $(i,i)$-th entry is $\arccos(\lambda_i)\in[0,\pi/2]$. We call each diagonal entry a principal angle.
\end{definition}

\noindent The following proposition is an immediate consequence of the definition.
\begin{proposition}[principal alignment]\label{prop:diagonal}
For any given two points  $[\VX],[\VY]$ on $\Gr(m,r)$, one can find two representatives $\VX\st\in [\VX]$ and $\VY\st \in [\VY]$ such that $\VX\st^\top \VY\st = \cos(\VTH)$ where $\VTH$ is the principal angle matrix between $[\VX]$ and $[\VY]$.
\end{proposition}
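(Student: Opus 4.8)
The plan is to use the singular value decomposition of the $r \times r$ matrix $\VX^\top \VY$, where $\VX \in [\VX]$ and $\VY \in [\VY]$ are arbitrary fixed representatives (so $\VX^\top \VX = \VY^\top \VY = \VI_r$). Write $\VX^\top \VY = \VA \, \VS' \, \VB^\top$ with $\VA, \VB \in \orth(r)$ and $\VS' = \diag(\lambda_1, \dots, \lambda_r)$ the diagonal matrix of singular values, ordered $\lambda_1 \geq \dots \geq \lambda_r$. By the definition of principal angles, $\VS' = \cos(\VTH)$ where $\VTH$ is the principal angle matrix between $[\VX]$ and $[\VY]$.

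The key step is then simply to change representatives within each equivalence class using the orthogonal factors $\VA$ and $\VB$. Define $\VX\st := \VX \VA \in [\VX]$ and $\VY\st := \VY \VB \in [\VY]$; both are legitimate choices of representative since $\VA, \VB \in \orth(r)$ and right-multiplication by an orthogonal matrix preserves the constraint $\VX\st^\top \VX\st = \VA^\top \VX^\top \VX \VA = \VI_r$ (similarly for $\VY\st$). Then
\begin{align*}
\VX\st^\top \VY\st = \VA^\top (\VX^\top \VY) \VB = \VA^\top (\VA \, \VS' \, \VB^\top) \VB = \VS' = \cos(\VTH),
\end{align*}
which is exactly the claimed identity.

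The only point requiring a word of care is that the singular values of $\VX^\top \VY$ do not depend on the choice of representatives $\VX, \VY$: replacing $\VX$ by $\VX \VQ_1$ and $\VY$ by $\VY \VQ_2$ for $\VQ_1, \VQ_2 \in \orth(r)$ only conjugates $\VX^\top \VY$ by orthogonal matrices on the left and right, leaving the singular values — and hence the principal angle matrix $\VTH$ — unchanged. This makes the statement well-posed and confirms $\VS' = \cos(\VTH)$ independently of how we started. There is essentially no obstacle here; the content of the proposition is that principal alignment is achieved purely by absorbing the orthogonal SVD factors into the (harmless) freedom in picking representatives, and the "proof" is really just bookkeeping of this observation.
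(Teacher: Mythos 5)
Your argument is correct and is essentially identical to the paper's own proof: both take the SVD of $\VX^\top\VY$ for arbitrary representatives and absorb the orthogonal factors into the choice of representatives. Your extra remark on the well-posedness of the principal angles under change of representatives is a fine (if unneeded) addition.
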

\begin{proof}
Choose arbitrary representatives, say $\VX$ and $\VY$, of $[\VX]$ and $[\VY]$, respectively. Let the singular value decomposition of $\VX^\top\VY$ be $\VP \VLA \VQ^\top$. 
Now choose $\VX\st := \VX\VP$ and $\VY\st := \VY\VQ$.
Then, we have $\VX\st^\top \VY\st = \VP^\top \VX^\top \VY \VQ = \VP^\top (\VP \VLA \VQ^\top) \VQ = \VLA$, as desired.
\end{proof}

 \paragraph{Distances between subspaces.} With the principal angles between subspaces, there are several well-established notions of distances; see \citep[\S 4.3]{edelman1998geometry}. 
Here we write them as distances on $\Gr(m,r)$.
\begin{definition}\label{def:distances}
  For $[\VX],[\VY]\in\Gr(m,r)$, let $\VTH$ be the principal angle matrix between $[\VX]$ and $[\VY]$. We define the following distances: 
  \begin{align}
      \darc([\VX],[\VY]) &:= \fnorm{\VTH} &&(\text{arc-length distance}),\\
      \dchor([\VX],[\VY]) &:= \fnorm{2\sin( \VTH/2)} &&(\text{chordal distance}),\\
       \dproj([\VX],[\VY]) &:= \fnorm{\sin( \VTH)} &&(\text{projection distance}).
  \end{align}
\end{definition}
\noindent In fact, the arc-length distance is equal to the Riemannian distance (the distance determined by the Riemannian metric). 
Moreover, one can easily verify that the other two distances  are equivalent metrics; see, e.g., \citep[Remark 6.1]{keshavan2010matrix}.

 \paragraph{Geodesics.}  
 It is known that when $m,r\geq 2$ there are at least countably many geodesics between  two points on $\Gr(m,r)$~\citep{wong1967differential}.
 On the other hand, if all the principal angles between the two points are less than $\pi/2$, there is a unique geodesic of the shortest length~\citep{wong1967differential}. 

Given two points $[\VX],[\VY] \in \Gr(m,r)$, let $\VTH$ be the principal angle matrix between $[\VX]$ and $[\VY]$.
Assume that all principal angles are less than $\pi/2$.
Choosing representatives $\VX\st \in [\VX]$ and $\VY\st \in [\VY]$ as per Proposition~\ref{prop:diagonal}, let $\dc\st \in \R^{m\times r}$ be the orthonormal matrix such that
\begin{align} \label{exp:geodesic_setup}
    \VY\st = \VX\st \cos(\VTH) + \dc\st \sin(\VTH)\,. 
\end{align}
Such an orthonormal matrix $\dc\st $ exists because $(\VI_m-\VX\st\VX\st^\top)\VY\st$ belongs to the tangent space $\T{\VX\st}\Gr(m,r)$.
Then, with such choices of representatives, the geodesic $\VG\st(t)$ of the shortest length joining the two points is given as
  \begin{align} \label{def:geodesic}
      \VG\st(t) = \VX\st \cos(t\VTH) +\dc\st\sin(t\VTH)  \quad \text{for $t\in [0,1]$}.
  \end{align} 
Note that this geodesic has the length $\fnorm{\VTH}=\darc([\VX],[\VY])$.

We end our discussion  by proving the following property that we will use later. 
\begin{mdframed} 
\begin{lemma}[geodesic convexity] \label{lem:geodesic_convexity}
For a fixed $[\VU]\in\Gr(m,r)$ and $\phi \in [0,\pi/4)$, let $\NN{[\VU]}{\phi}$ be the subset of $\Gr(m,r)$ consisting of points whose principal angles to $[\VU]$ are all less than equal to $\phi$, i.e., letting $\VTH_{[\VX]}$ be the principal angle matrix between $[\VX]$ and $[\VU]$,
\begin{align*}
\NN{[\VU]}{\phi}:= \left\{[\VX]~:~ \VTH_{[\VX]} \preceq \phi \cdot  \VI \right\}.
\end{align*}
Then, $\NN{[\VU]}{\phi}$ is geodesically convex, i.e., for any two points $[\VX],[\VY]\in\NN{[\VU]}{\phi}$, the unique shortest geodesic joining them is entirely contained in  $\NN{[\VU]}{\phi}$.
\end{lemma}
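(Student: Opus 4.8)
The plan is to show that for any two points $[\VX],[\VY]\in\NN{[\VU]}{\phi}$, the shortest geodesic $\VG\st(t)$ joining them stays inside $\NN{[\VU]}{\phi}$, by controlling the largest principal angle between $\VG\st(t)$ and $[\VU]$ for each $t\in[0,1]$. The largest principal angle between two subspaces equals the arc-length distance when there is only one ``direction,'' but in general I will work with $\sin$ of the largest principal angle, i.e. with the operator norm of $(\VI_m-\VU\VU^\top)$ applied to an orthonormal basis of the moving subspace; equivalently, the largest principal angle between $[\VZ]$ and $[\VU]$ is $\le\phi$ iff $\sigma_{\min}(\VU^\top\VZ)\ge\cos\phi$ for an (any) orthonormal representative $\VZ$. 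So the goal becomes: $\sigma_{\min}\big(\VU^\top\VG\st(t)\big)\ge\cos\phi$ for all $t$.

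First I would set up the geodesic explicitly via \eqref{def:geodesic}: pick representatives $\VX\st,\VY\st$ aligned as in Proposition~\ref{prop:diagonal} so that $\VX\st{}^\top\VY\st=\cos(\VTH)$, and write $\VG\st(t)=\VX\st\cos(t\VTH)+\dc\st\sin(t\VTH)$ with $\dc\st$ the orthonormal matrix from \eqref{exp:geodesic_setup}. Then $\VU^\top\VG\st(t)=(\VU^\top\VX\st)\cos(t\VTH)+(\VU^\top\dc\st)\sin(t\VTH)$. For any unit vector $\vv\in\R^r$, let $\vg(t)=\VG\st(t)\vv$, a unit vector in $\R^m$ (since $\VG\st(t)$ is orthonormal and $\cos^2+\sin^2=\VI$). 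The quantity I must lower-bound is $\fnorm{\VU^\top\vg(t)}=\norm{\VU\VU^\top\vg(t)}$, the cosine of the angle between $\vg(t)$ and $\col(\VU)$; since $\sigma_{\min}(\VU^\top\VG\st(t))=\min_{\norm{\vv}=1}\norm{\VU^\top\vg(t)}$, it suffices to bound this below by $\cos\phi$ for every such $\vv$. The key geometric fact I would use is that $t\mapsto\vg(t)$ traces (after normalization it is already unit-norm) a curve on the unit sphere $S^{m-1}$, and in fact a great-circle-like arc: $\vg(t)=\vg(0)\cos(t\theta_{\vv})+\vh\sin(t\theta_{\vv})$ for a suitable angle $\theta_{\vv}$ and unit vector $\vh\perp\vg(0)$ — this follows from $\VX\st,\dc\st$ being orthonormal with orthogonal columns and $\VTH$ diagonal, so the problem decouples per coordinate but recombines cleanly for a general $\vv$. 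Consequently the angle from $\vg(t)$ to the fixed subspace $\col(\VU)$ is a convex-type function of $t$ whose endpoint values are $\le\phi$.

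The crux — and the main obstacle — is the following: the angle between a point moving along a great circle on $S^{m-1}$ and a fixed linear subspace $\col(\VU)$ is \emph{not} automatically a convex function of the arc-length parameter, so ``endpoints $\le\phi$'' does not immediately give ``interior $\le\phi$.'' This is exactly where the hypothesis $\phi<\pi/4$ enters. I would reduce to a two-dimensional computation: the plane $P=\mathrm{span}\{\vg(0),\vh\}$ containing the great-circle arc, and its principal angles to $\col(\VU)$; projecting $\col(\VU)$ onto $P$ and working with the function $\psi(t)=\angle(\vg(t),\col(\VU))$, one writes $\cos^2\psi(t)=\norm{\Pi_{\VU}\vg(t)}^2$ where $\Pi_\VU=\VU\VU^\top$, expand using $\vg(t)=\vg(0)\cos(t\theta)+\vh\sin(t\theta)$, and get $\cos^2\psi(t)=a\cos^2(t\theta)+2b\cos(t\theta)\sin(t\theta)+c\sin^2(t\theta)$ with $a=\norm{\Pi_\VU\vg(0)}^2$, $c=\norm{\Pi_\VU\vh}^2$, $b=\la\Pi_\VU\vg(0),\vh\ra$. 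This is $\tfrac{a+c}{2}+\tfrac{a-c}{2}\cos(2t\theta)+b\sin(2t\theta)$, a sinusoid in $2t\theta$; its minimum over an interval of length $2\theta\le 2\darc([\VX\st],[\VY\st])\le\pi$ (using $\phi<\pi/4$ so each principal angle of $\VTH$ is $<\pi/4$, hence $\fnorm{\VTH}$ small enough, and a triangle-inequality bound $\darc([\VX],[\VY])\le\darc([\VX],[\VU])+\darc([\VU],[\VY])<\pi/2$) is attained at an endpoint. The condition $\phi<\pi/4$ guarantees the relevant arc of the sinusoid is ``monotone past the trough'' — more precisely it ensures $2t\theta$ ranges over less than a half-period, so $\min_{t\in[0,1]}\cos^2\psi(t)=\min\{\cos^2\psi(0),\cos^2\psi(1)\}\ge\cos^2\phi$. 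I expect pinning down this sinusoid-minimization lemma, and correctly tracking that $\theta\le\pi/2$ via the triangle inequality for $\darc$ on the Grassmannian, to be the only genuinely delicate part; everything else (existence and form of the shortest geodesic, the characterization of the largest principal angle via $\sigma_{\min}$) is supplied by Section~\ref{sec:principal} and Proposition~\ref{prop:diagonal}. I would finish by noting that the bound $\sigma_{\min}(\VU^\top\VG\st(t))\ge\cos\phi$ for all $t$ is exactly the statement $\VTH_{[\VG\st(t)]}\preceq\phi\cdot\VI$, i.e. $\VG\st(t)\in\NN{[\VU]}{\phi}$, which is what we wanted.
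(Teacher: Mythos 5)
Your reduction to $\sigma_{\min}(\VU^\top\VG\st(t))\ge\cos\phi$ is the right target, but the central step — that for a general unit vector $\vv$ the curve $t\mapsto\vg(t)=\VG\st(t)\vv$ is a great-circle arc $\vg(0)\cos(t\theta_\vv)+\vh\sin(t\theta_\vv)$ — is false whenever the principal angles between $[\VX]$ and $[\VY]$ are not all equal. Writing $\vg(t)=\sum_i v_i\bigl(\cos(t\theta_i)\vx_i+\sin(t\theta_i)\vd_i\bigr)$, the coordinates do decouple, but they do \emph{not} ``recombine cleanly'': for distinct $\theta_i$ the curve lives on a torus inside $S^{m-1}$ and generically spans a $2k$-dimensional subspace ($k$ the number of nonzero $v_i$), so it is contained in no $2$-plane. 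Consequently $\norm{\Pi_\VU\vg(t)}^2$ is a trigonometric polynomial with frequencies $\theta_i\pm\theta_j$, not a single sinusoid, and the entire endpoint-minimization argument collapses. Even in the genuinely two-dimensional case your ``delicate part'' is not justified as stated: a sinusoid $C+R\cos(s-\delta)$ restricted to an interval of length less than a half-period can perfectly well attain its minimum (its trough) in the interior, so ``endpoints $\ge\cos^2\phi$'' does not follow from the interval-length bound alone — you would need to exploit the specific relations among $a$, $b$, $c$ and the fact that $\cos^2\phi>1/2$. A further slip: the triangle inequality for $\darc$ controls $\fnorm{\VTH_{[\VX],[\VU]}}\le\sqrt{r}\,\phi$, not $\phi$, so it does not give the needed bound $2\phi<\pi/2$ on the \emph{largest} principal angle between $[\VX]$ and $[\VY]$; that bound comes instead from the variational (min-over-unit-vectors) characterization of principal angles.

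The paper sidesteps all of this by never parametrizing along the geodesic as a spherical arc. It writes $\VG\st(t)=\VX\st\VD_1+\VY\st\VD_2$ with nonnegative diagonal $\VD_1,\VD_2$, so every unit $\vg\in\col(\VG\st(t))$ decomposes as $\vg=\vxxx+\vyyy$ with $\vxxx\in\col(\VX)$, $\vyyy\in\col(\VY)$ and $\inp{\vxxx}{\vyyy}\ge 0$; a short contradiction argument (this is exactly where $\phi<\pi/4$, i.e.\ $\cos\phi>1/\sqrt{2}$, enters) shows $\inp{\vu}{\vxxx}$ and $\inp{\vu}{\vyyy}$ must share a sign for every unit $\vu\in\col(\VU)$, whence $|\inp{\vu}{\vg}|\ge(\norm{\vxxx}_2+\norm{\vyyy}_2)\cos\phi\ge\cos\phi$. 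If you want to rescue your route, you would have to replace the great-circle reduction with something handling all frequencies $\theta_i$ simultaneously — at which point you are essentially forced into a conic-combination argument of the paper's type.
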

\end{mdframed}

\begin{proof}
We begin the proof by first invoking  the variational characterization of the principal angles (see, e.g., \citep{miao1992principal}): 
\begin{align} \label{exp:variational}
    [\VX] \in \NN{[\VU]}{\phi} \quad \Longleftrightarrow\quad \min_{\substack{\vx \in \col([\VX]),~
    \vu \in \col([\VU])\\
   \norm{\vx}_2=\norm{\vu}_2=1 }}  \left|\inp{\vx}{\vu}\right|  \geq \cos(\phi).
\end{align}
Now pick any two points $[\VX]$ and $[\VY]$ from $\NN{[\VU]}{\phi}$.
Since $\phi < \pi/4 $, it follows from \eqref{exp:variational} that the principal angles between $[\VX]$ and $[\VY]$ are all less than equal to $2\phi <\pi/2$.
Hence, there is a unique geodesic of the shortest length.

For concreteness, let us choose representatives as per \eqref{exp:geodesic_setup}. 
Also, let $\VG\st(t)$ ($t\in[0,1]$) be the unique geodesic of the shortest length between $\VX\st$ and $\VY\st$ as in \eqref{def:geodesic}. 
For simplicity, we may assume that   $\theta_i> 0$ for all $i\in[r]$. In fact, if $\theta_i=0$, the $i$-th column of $\VG\st(t)$ is constantly equal to the $i$-th column of $\VX\st$ along the geodesic.
Hence, we have
 \begin{align*} 
      \VG\st(t) &= \VX\st \cos(t\VTH) +\dc\st\sin(t\VTH) =: \VX\st \VD_1  +\VY\st\VD_2 \,,
  \end{align*}
  where  $\VD_1:= \big( \cot(t\VTH) - \cot(\VTH)\big) \sin(t\VTH)$ and $\VD_2:=[\sin(\VTH)]^{-1}\sin(t\VTH)$ are some diagonal matrices with nonnegative entries (since $\cot(\cdot)$ is decreasing on $[0,\pi/2]$).
 
 Now in order to show that $[\VG\st (t)] \in\NN{[\VU]}{\phi}$, let us choose an arbitrary $\vg \in \col(\VG\st(t)) $. 
 We can write $\vg := \VG\st(t) \vla$ for some $\vla \in \R^{r}$.
 Letting $\vxxx:= \VX\st \VD_1\vla$ and $\vyyy:= \VY\st \VD_2\vla$,
 \begin{align*}
     \vg = \vxxx + \vyyy \quad\text{and}\quad  \inp{\vxxx}{\vyyy} = \vla^\top \VD_1^\top \cos(\VTH) \VD_2 \vla \geq 0\,.
 \end{align*}
 Based on this, we prove the following claim.
 \begin{claim}
 Suppose that $\vx \in \col(\VX)$ and $\vy \in \col(\VY)$ such that $\inp{\vx}{\vy}\geq 0$, $\norm{\vx}_2=\norm{\vy}_2=1$.
 Then,  for any unit-norm vector $\vu \in \col(\VU)$, we have $\inp{\vu}{\vx} \cdot \inp{\vu}{\vy} >0$.
 \end{claim}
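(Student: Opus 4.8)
The plan is to combine the variational characterization \eqref{exp:variational} with an elementary one-dimensional projection estimate. Since $[\VX],[\VY]\in\NN{[\VU]}{\phi}$ and $\vx,\vy,\vu$ are the relevant unit vectors, \eqref{exp:variational} immediately gives $|\inp{\vu}{\vx}|\geq\cos\phi>0$ and $|\inp{\vu}{\vy}|\geq\cos\phi>0$, so neither factor in $\inp{\vu}{\vx}\cdot\inp{\vu}{\vy}$ vanishes. Hence the whole content of the claim is that these two inner products have the \emph{same} sign, and I would prove this by contradiction.

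So suppose $a:=\inp{\vu}{\vx}$ and $b:=\inp{\vu}{\vy}$ have opposite signs. Decompose $\vx=a\vu+\vx^{\perp}$ and $\vy=b\vu+\vy^{\perp}$ with $\vx^{\perp},\vy^{\perp}$ orthogonal to $\vu$, so that $\norm{\vx^{\perp}}_2^2=1-a^2$, $\norm{\vy^{\perp}}_2^2=1-b^2$, and $\inp{\vx}{\vy}=ab+\inp{\vx^{\perp}}{\vy^{\perp}}$. Now bound each piece: from $a^2,b^2\geq\cos^2\phi$ and opposite signs we get $ab\leq-\cos^2\phi$, while Cauchy--Schwarz gives $|\inp{\vx^{\perp}}{\vy^{\perp}}|\leq\norm{\vx^{\perp}}_2\norm{\vy^{\perp}}_2=\sqrt{(1-a^2)(1-b^2)}\leq\sin^2\phi$. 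Adding these, $\inp{\vx}{\vy}\leq-\cos^2\phi+\sin^2\phi=-\cos(2\phi)$, which is strictly negative since $\phi<\pi/4$ forces $2\phi<\pi/2$. This contradicts the hypothesis $\inp{\vx}{\vy}\geq0$, so $a$ and $b$ share a sign and $\inp{\vu}{\vx}\cdot\inp{\vu}{\vy}>0$.

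The argument is short, and I do not anticipate a genuine obstacle; the one place deserving care is that the strict bound $\phi<\pi/4$ is used essentially here — not merely (as elsewhere in the proof) to guarantee that the shortest geodesic between $[\VX]$ and $[\VY]$ is well-defined, but to ensure $\cos(2\phi)>0$, which is precisely what makes the opposite-sign scenario impossible rather than borderline. I would also record the degenerate case $\vx=\pm\vu$ (equivalently $\vx^{\perp}=\mathbf{0}$, and symmetrically for $\vy$), where the Cauchy--Schwarz bound collapses to $0$ and the conclusion $ab>0$ is even more immediate.
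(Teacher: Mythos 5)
Your proof is correct and rests on the same two pillars as the paper's: the variational characterization \eqref{exp:variational} together with a contradiction argument showing that opposite signs of $\inp{\vu}{\vx}$ and $\inp{\vu}{\vy}$ would force $\inp{\vx}{\vy}<0$. The difference is only in the final estimate. The paper specializes immediately to $\cos\phi>1/\sqrt{2}$, bounds $\inp{\vu}{\vx-\vy}>\sqrt{2}$ against $\norm{\vx-\vy}_2=\sqrt{2-2\inp{\vx}{\vy}}$ by Cauchy--Schwarz, and concludes $\inp{\vx}{\vy}<0$; you instead decompose $\vx$ and $\vy$ into components parallel and orthogonal to $\vu$ and obtain the sharper quantitative bound $\inp{\vx}{\vy}\leq-\cos^2\phi+\sin^2\phi=-\cos(2\phi)$. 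Your route makes the role of the hypothesis $\phi<\pi/4$ fully transparent (it is exactly what makes $\cos(2\phi)>0$) and handles both sign patterns uniformly through $ab\leq-\cos^2\phi$, whereas the paper reduces to the case $\inp{\vu}{\vx}>0>\inp{\vu}{\vy}$ (implicitly replacing $\vu$ by $-\vu$ if needed) and is marginally shorter. Both correctly deduce strict positivity of the product from $|\inp{\vu}{\vx}|,|\inp{\vu}{\vy}|\geq\cos\phi>0$, which you note explicitly. No gaps.
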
 
\noindent To prove this claim, suppose to the contrary that there exists a vector $\vu \in \col(\VU)$ such that  $\inp{\vu}{\vx}>0$ but $\inp{\vu}{\vy}<0$.  
Then from \eqref{exp:variational}, it follows that $\inp{\vu}{\vx}> 1/\sqrt{2}$ and 
$\inp{\vu}{\vy} <-1/\sqrt{2}$, since $\phi <\pi/4$.
Then, using the hypothesis $\norm{\vx}_2=\norm{\vy}_2=1$, this implies that 
\begin{align*}
    \sqrt{2} < \inp{\vu}{\vx -\vy} \leq \norm{\vu}_2\norm{\vx -\vy }_2 =\sqrt{2-2\inp{\vx}{\vy}}.
\end{align*}
Hence, it holds that $\inp{\vx}{\vy}<0$.
This  contradicts the hypothesis that $\inp{\vx}{\vy}\geq 0$.

Now due to the claim,  
for any unit-norm   $\vu\in \col(\VU)$, we have either (i) $\inp{\vu}{\vxxx/\norm{\vxxx}_2} \geq  \cos(\phi)$ and $\inp{\vu}{\vyyy/\norm{\vyyy}_2} \geq \cos(\phi)$ or
(ii) $\inp{\vu}{\vxxx/\norm{\vxxx}_2} \leq  -\cos(\phi)$ and $\inp{\vu}{\vyyy/\norm{\vyyy}_2} \leq -\cos(\phi)$.
Since $\vg$ is a unit-norm vector that is a conic combination of two vectors $\vxxx$ and $\vyyy$, it follows that either (i)   $\inp{\vu}{\vg} \geq \cos(\phi)$ or (ii)
$\inp{\vu}{\vg} \leq -\cos(\phi)$. 
In other words, $\left|\inp{\vu}{\vg}\right| \geq \cos(\phi)$ for any unit-norm   $\vu\in \col(\VU)$.
Since $\vg$ was arbitrarily chosen from $\col(\VG\st(t))$, this completes the proof.
\end{proof}

\begin{remark}
When $\phi=\pi/4$, a careful inspection of the proof of Lemma~\ref{lem:geodesic_convexity} reveals that $\NN{[\VU]}{\pi/4}$ is also geodesically convex in an appropriate sense: for any two point  $[\VX],[\VY]\in\NN{[\VU]}{\phi}$, there is at least one geodesic of the shortest length joining them  that is entirely contained in  $\NN{[\VU]}{\phi}$.
However, one can see that this geodesic convexity is no longer true for $\phi>\pi/4$.
For instance, for $\Gr(2,1)$, when $\phi=3\pi/8$, choosing $[\vu] = [(0,1)^\top]$, $[\vx]=[(\cos(\pi/8),\sin(\pi/8))^\top]$ and $[\vy]=[(\cos(-\pi/8),\sin(-\pi/8))^\top]$, one can easily see that the unique geodesic between $[\vx]$ and $[\vy]$ is outside of $\NN{[\vu]}{3\pi/8}$. 
\end{remark}

\begin{remark} \label{rmk:curvature}
From the fact that Grassmann manifolds have sectional curvatures upper bounded by two~\citep{wong1968sectional}, it follows from a classical result in Riemannian geometry~
\citep{klingenberg1959contributions} that the convexity  radius is greater than or equal to $\pi/4$.
In light of this, Lemma~\ref{lem:geodesic_convexity} reveals that in fact  for Grassmannian manifolds, the actual convexity radius is much larger than the classical lower bound.  
\end{remark}

 \section{Matrix factorization over a Grassmann manifold}\label{sec:formulation}

In this section, we describe a formulation of the matrix factorization approach as an optimization over a Grassmannian manifold~\citep{keshavan2010matrix,dai2011subspace,boumal2011rtrmc,dai2012geometric, boumal2015low}. 
First, consider the full observation case, i.e., $\Omega = [m]\times [n]$:
\begin{align} \label{exp:factorization}
    \mini_{\substack{\VX\in\R^{m\times r}\\ 
    \VY\in \R^{n\times r}}}\left[\gf(\VX,\VY):= \frac{1}{2}\fnorm{\VX\VY^\top -\VM}^2\right].
\end{align}
By symmetry, we have 
$\gf(\VX ,\VY )=\gf(\VX^\star \VR,\VY^\star(\VR^{-1})^\top)$ for any invertible $\VR$, and hence without loss generality, we may assume that $\VX$ is orthonormal.
Now in order to reduce the number of optimization parameters, we optimize this cost function over $\VY$.
Denoting $\VYX := \argmin_{\VY\in \R^{n\times r}} \gf(\VX,\VY)$,  the optimality condition yields:
\begin{align*}
    0= \nabla_2\,\gf(\VX,\VYX) =  (\VX\VYX^\top  - \VM)^\top \VX \end{align*}
    which is equivalent to $\VYX \VX^\top \VX =  \VM^\top \VX$.
In particular, since $\VX$ is orthonormal, i.e., $\VX^\top \VX =\VI_r$, we obtain $\VYX = \VM^\top \VX$.
Plugging this back to \eqref{exp:factorization}, we arrive at the following optimization problem over  $\Gr(m,r)$:
\begin{align} \label{cost:formulation_full}
    \mini_{\substack{\VX\in\R^{m\times r}\\ 
    \VY\in \R^{n\times r}}} \left[\ff(\VX):=\frac{1}{2}\fnorm{(\VI_m-\VX\VX^\top)\VM}^2\right].
\end{align}
Note that $\ff(\VX\VQ)=\ff(\VX)$ for any $\VQ\in \orth(r)$ and hence \eqref{cost:formulation_full} is well-defined on $\Gr(m,r)$.
The above derivation can be also found in \citep[\S III-A]{pitaval2015convergence}. 
\begin{remark} \label{rmk:subspace}
A similar objective function was considered in \citep{maunu2019well} under the context of robust subspace recovery.
\end{remark}

To have a glimpse of the cost \eqref{cost:formulation_full}, we illustrate it for the simplest case of rank-$1$ and full observation.
Assume that  $\VM=\sigma \vu\vv^\top $ for unit vectors $\vu$ and $\vv$ and $\sig>0$.
Using $\vx$ to denote the argument of the cost function, \eqref{cost:formulation_full} becomes
\begin{align*}
   \ff(\vx) &= \frac{1}{2}\fnorm{(\VI_m -\vx\vx^\top)\VM}^2=\frac{\sig^2}{2}\fnorm{(\VI_m -\vx\vx^\top)\vu\vv^\top}^2\,.
\end{align*}
See Figure~\ref{fig:illustration} for the landscape for this simplest case.

\begin{figure}
    \centering 
    \includegraphics[width=.4\columnwidth]{./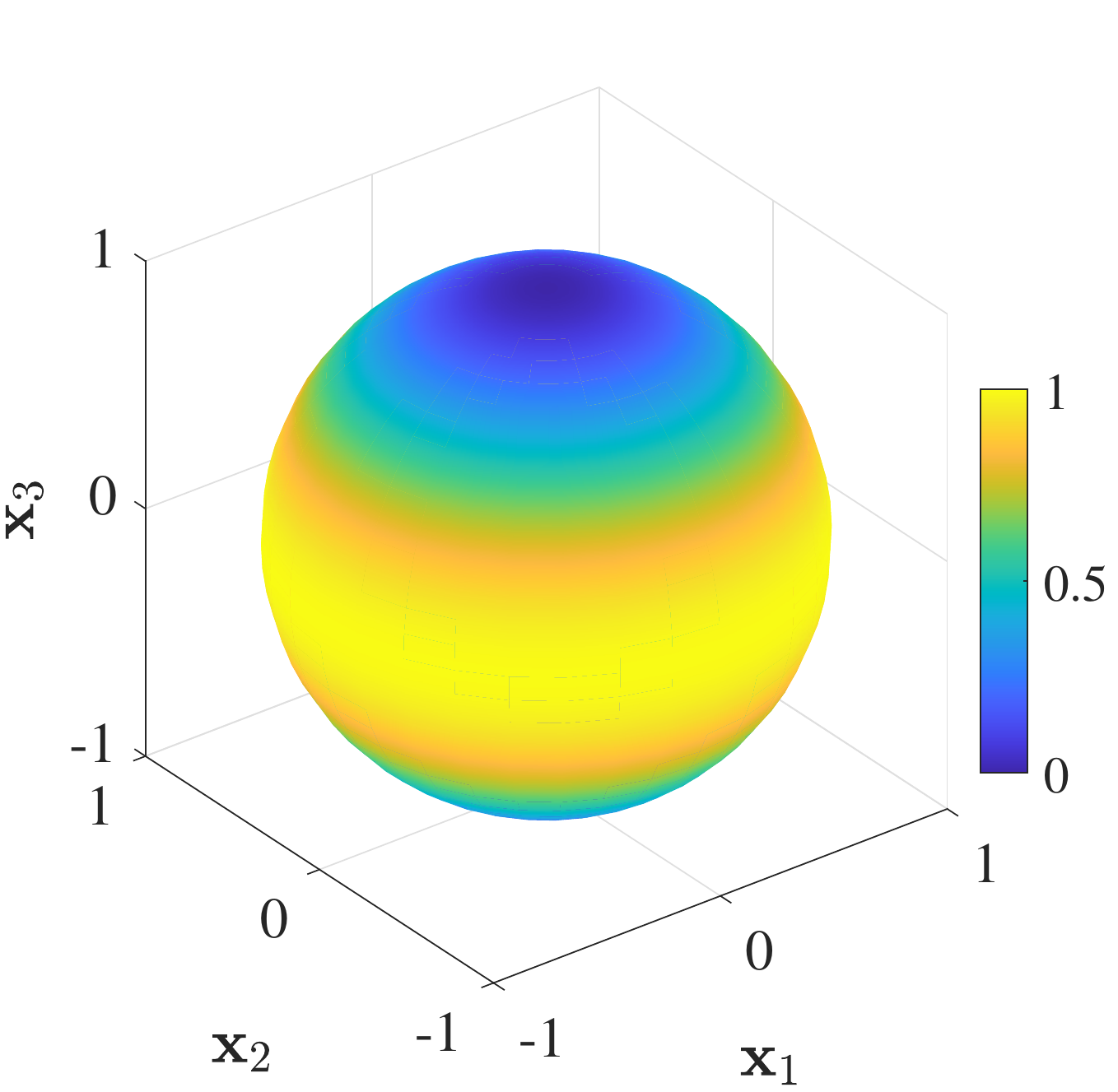} ~~\qquad~~
     \includegraphics[width=.4\columnwidth]{./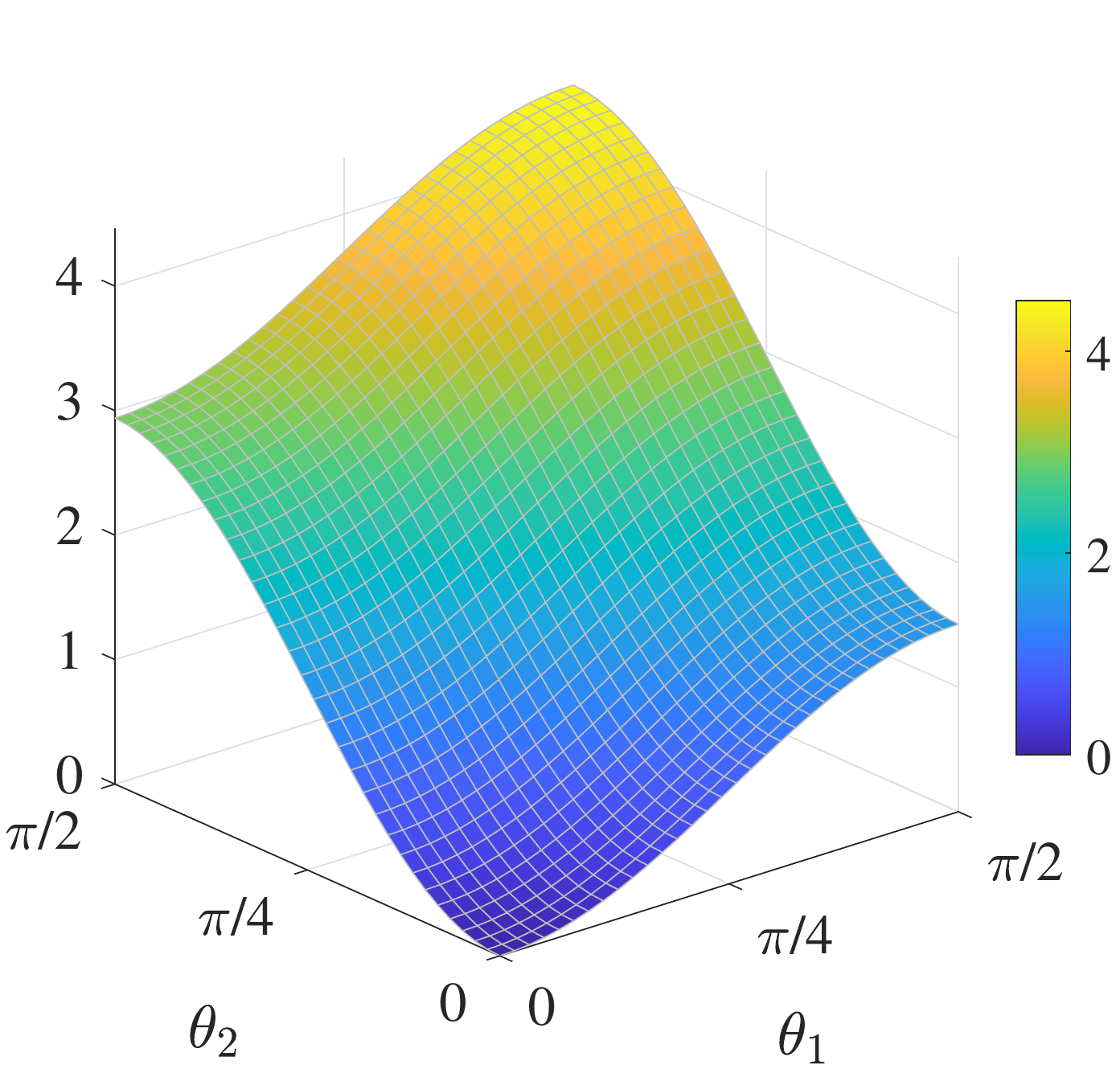}
    \caption{Illustration of the fully observed cost   \eqref{cost:formulation_full}. (Left) The rank-one case with $\sig =\sqrt{2}$ and $\vu = (0,0,1)$. (Right) Illustration of the landscape for the rank-$6$ case using two principal angles to the ground truth $[\VU]$.     We set $m=100$, $n=200$, and $\VS =\diag(1,1,1,1.4,1.4,1.4)$.
 }
    \label{fig:illustration}
\end{figure}

In general, when only partially observed entries are available, we have
\begin{align*} 
    \mini_{\substack{\VX\in\R^{m\times r}\\ 
    \VY\in \R^{n\times r}}}\left[\gp(\VX,\VY):= \frac{1}{2}\pnorm{\VX\VY^\top -\VM}^2\right].
\end{align*}
Similarly to the fully observed case, one can reduce the number of parameters by considering  $\VYX := \argmin_{\VY\in \R^{n\times r}} \gp(\VX,\VY)$. Then, we obtain the optimization problem
\begin{align*} 
    \mini_{\VX\in\R^{m\times r},~\VX^\top\VX=\VI } \left[\fp(\VX):= \min_{\VY\in \R^{n\times r}}\gp(\VX, \VY) \right].
\end{align*}

In the subsequent section, we first focus on understanding the fully observed case.
We will continue our discussion regarding the partially observed case in Section~\ref{sec:partial}.

\subsection{Interpretation based on the projection distance} \label{sec:projection}

To better understand the formulation, we provide a fruitful interpretation for the full observation cost \eqref{cost:formulation_full}.
Our interpretation will be based on  the principal angle and the  distances introduced in Section~\ref{sec:principal}. 
For setup,  we make the following choice of representative elements in the equivalence class, motivated by Proposition~\ref{prop:diagonal}. 
We call this choice the \emph{principal alignment}, and we will repeatedly use this choice in later calculations: 
\begin{mdframed}[roundcorner=10pt,shadow=false] 
\begin{custom3}[\hypertarget{setup}{{\sf Principal alignment}}]
Let $\VTH$ be the principal angle matrix  between $[\VX]$ and $[\VU]$.
\begin{enumerate}
    \item By Proposition~\ref{prop:diagonal},  there exist   $\VX\st \in [\VX]$, $\VU\st \in [\VU]$ s.t. $\VX\st^\top \VU\st = \cos (\VTH)$.
    \item Let $\dd\in \T{\VX\st}\Gr(m,r)$ be an orthonormal matrix s.t. $\dd \sin (\VTH)= (\VI_m-\VX\st\VX\st)^\top\VU\st $.
\end{enumerate}
Write $\VU\st =\VU\VQ$ for $\VQ\in \orth(r)$.
Let $\QQ := \VQ^\top \VS^2 \VQ\in \R^{r\times r}$ and $\qq_i:=\QQ_{i,i}$ for $i\in[r]$. 
Note that $\smin^2 \leq \qq_i \leq \smax^2$ for all $i\in[r]$.
\end{custom3} 
\end{mdframed}
\begin{remark} $\VTH,\dc\st, \VQ,\QQ$ all depend on the point $[\VX]$.
\end{remark}
With the  \setup, it is easy to see that
\begin{align*}
    \ff(\VX) &= \frac{1}{2}\fnorm{(\VI_m -\VX\st\VX\st^\top )\VU\st \VQ^\top \VS\VV^\top}^2\\
    &= \frac{1}{2}\fnorm{ \dd \sin(\VTH) \VQ^\top \VS}^2= \frac{1}{2}\Tr[\sin^2 (\VTH)\QQ ]\,.
\end{align*}
See Figure~\ref{fig:illustration} for an illustration of the landscape based on the principal angles to $[\VU]$.
Therefore, we arrive at the following  interpretation:
\begin{center}
The cost $\ff$ is equal to a weighted  projection distance.
\end{center} 
Having this interpretation, we now study the geometry of the cost function.

\section{Geometry of the fully observed  case} \label{sec:landscape}

 In this section, we study the geometry of the full observation cost $\ff$ defined in  \eqref{cost:formulation_full}. 
 Let us first compute the derivatives. For any $\dc \in \T{\VX}\Gr(m,r)$, 
    \begin{align}
      \inp{\grad \ff(\VX)}{\dc}&= D \ff(\VX)[\dc]   \nn\\
       &= - \frac{1}{p}\inpp{  (\dc \VX^\top+ \VX \dc^\top) \VM }{ (\VI_m -\VX\VX^\top)\VM  }\,. \label{grad:full}\\
    \hess \ff(\VX)[\dc,\dc]&=D(\grad \ff(\VX)[\dc])[\dc] \nn\\
    \begin{split}
        &= - \frac{2}{p}\inpp{  \dc \dc^\top \VM }{(\VI_m -\VX\VX^\top)\VM }  +\frac{1}{p}\pnorm{  (\dc \VX^\top+ \VX \dc^\top) \VM }^2\,. 
    \end{split} \label{hess:full}
\end{align}

  \subsection{Geometry based on gradient} \label{sec:full_gradient}

We first derive a compact expression for gradient based on the principal angles.   
 
  \begin{theorem}[characterization of gradient] \label{thm:gradient}
  Assume  $\Omega=[m]\times[n]$.
  Let $[\VX]\in\Gr(m,r)$.
 With the \setup,  the gradient has the following compact expression:
\begin{align} \label{grad:full_compact}
    \grad \ff(\VX\st)  =  - \dd \sin(\VTH)\,\,\QQ\,\cos (\VTH)\,.
\end{align}
Consequently, $ \frac{\smin^4}{4}\fnorm{\sin (2\VTH)}^2\leq  \fnorm{\grad \ff([\VX])}^2 \leq \frac{\smax^4}{4}\fnorm{\sin (2\VTH)}^2$.
 \end{theorem}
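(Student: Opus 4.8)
The plan is to first establish the closed form \eqref{grad:full_compact} and then read the two-sided norm bound off of it. For the closed form I would start from the directional-derivative identity \eqref{grad:full}, which has already performed the Euclidean differentiation. Since $\dc\in\T{\VX}\Gr(m,r)$ satisfies $\VX^\top\dc=\VZE$, we get $\VX^\top(\VI_m-\VX\VX^\top)=\VZE$ and $\dc^\top(\VI_m-\VX\VX^\top)=\dc^\top$; expanding the inner product in \eqref{grad:full} with these two identities makes the $\VX\dc^\top\VM$ term vanish and collapses the $\dc\VX^\top\VM$ term to $\inp{\VM\VM^\top\VX}{\dc}$. As this holds for every tangent $\dc$ and the matrix $(\VI_m-\VX\VX^\top)\VM\VM^\top\VX$ is itself tangent at $\VX$, I conclude $\grad\ff(\VX)=-(\VI_m-\VX\VX^\top)\VM\VM^\top\VX$. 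I would then insert the principal alignment: writing $\VM\VM^\top=\VU\VS^2\VU^\top=\VU\st\QQ\VU\st^\top$ and using the defining identities $\VU\st^\top\VX\st=\cos(\VTH)$ and $(\VI_m-\VX\st\VX\st^\top)\VU\st=\dd\sin(\VTH)$ of the \setup, a direct substitution simplifies $\grad\ff(\VX\st)$ to $-\dd\sin(\VTH)\,\QQ\,\cos(\VTH)$, which is \eqref{grad:full_compact}.

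For the norm estimates I would use that $\dd$ is orthonormal and that $\sin(\VTH),\cos(\VTH)$ are diagonal, so that
\[
\fnorm{\grad\ff(\VX\st)}^2=\fnorm{\sin(\VTH)\,\QQ\,\cos(\VTH)}^2=\sum_{i,j}\sin^2\theta_i\,\QQ_{ij}^2\,\cos^2\theta_j .
\]
The lower bound is then immediate: keeping only the $i=j$ terms and using $\QQ_{ii}=\qq_i\ge\smin^2$ gives $\sum_i\sin^2\theta_i\,\qq_i^2\,\cos^2\theta_i\ge\smin^4\sum_i\sin^2\theta_i\cos^2\theta_i=\tfrac{\smin^4}{4}\fnorm{\sin(2\VTH)}^2$.

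The hard part will be the matching upper bound. Its diagonal contribution is at most $\smax^4\sum_i\sin^2\theta_i\cos^2\theta_i=\tfrac{\smax^4}{4}\fnorm{\sin(2\VTH)}^2$ for free, but the off-diagonal sum $\sum_{i\ne j}\sin^2\theta_i\,\QQ_{ij}^2\,\cos^2\theta_j$ must be absorbed using only the spectral information $\smin^2\VI\preceq\QQ\preceq\smax^2\VI$. This is the step I expect to require the most care, because $\fnorm{\sin(\VTH)\,\QQ\,\cos(\VTH)}$ genuinely depends on the off-diagonal pattern of $\QQ$: when some $\theta_i$ is near $0$ and some $\theta_j$ near $\pi/2$, the single entry $\sin^2\theta_j\cos^2\theta_i\,\QQ_{ij}^2\approx\QQ_{ij}^2$ need not be small even though $\fnorm{\sin(2\VTH)}$ is. The naive operator-norm route, using $\fnorm{\sin(\VTH)\QQ}^2=\Tr[\QQ^2\sin^2(\VTH)]\le\smax^2\Tr[\QQ\sin^2(\VTH)]$ together with its mirror in $\cos(\VTH)$, only yields $\fnorm{\sin(\VTH)\QQ\cos(\VTH)}^2\le\smax^4\min\{\sum_i\sin^2\theta_i,\ \sum_i\cos^2\theta_i\}$, which is weaker than the advertised form. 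The clean estimate does become tight when $\QQ$ is diagonal, i.e.\ when $\col(\VX)$ is aligned with the singular directions of $\VM$, so I would analyze that special case first and then try to control the off-diagonal terms in general, if necessary at the cost of a slightly larger constant.
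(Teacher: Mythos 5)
Your derivation of the closed form is exactly the paper's: both arguments reduce \eqref{grad:full} to $\grad\ff(\VX)=-(\VI_m-\VX\VX^\top)\VM\VM^\top\VX$ by killing the $\VX\dc^\top\VM$ term and then substitute the principal alignment, and your lower bound (keep only the diagonal of $\QQ$ and use $\qq_i\ge\smin^2$) is correct. The substantive issue is the upper bound, and here your hesitation is not a weakness of your write-up: the inequality $\fnorm{\grad\ff([\VX])}^2\le\frac{\smax^4}{4}\fnorm{\sin(2\VTH)}^2$ is false in general, for exactly the off-diagonal reason you isolate. Concretely, take $m=n=3$, $r=2$, $\VM=2\ve_1\vv_1^\top+\ve_2\vv_2^\top$ (so $\VU=[\ve_1,\ve_2]$, $\VS=\diag(2,1)$) and $\VX=[(\ve_1+\ve_2)/\sqrt2,\ \ve_3]$. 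The principal angles to $[\VU]$ are $0$ and $\pi/2$, so $\fnorm{\sin(2\VTH)}=0$, yet $\grad\ff(\VX)=-(\VI_3-\VX\VX^\top)\VM\VM^\top\VX$ has first column $-\tfrac{3}{2\sqrt2}(\ve_1-\ve_2)$ and hence $\fnorm{\grad\ff(\VX)}^2=9/4$. In your expansion this is precisely the single off-diagonal term $\sin^2\theta_2\,\QQ_{21}^2\cos^2\theta_1=\QQ_{21}^2$, since here $\VQ$ is the rotation by $\pi/4$ and $\QQ=\VQ^\top\VS^2\VQ$ has $\QQ_{21}=-3/2$. So no constant-factor repair of the advertised form can succeed; the honest bounds are the ones you already obtained, e.g. $\fnorm{\grad\ff}^2\le\smax^4\min\{\fnorm{\sin\VTH}^2,\fnorm{\cos\VTH}^2\}$, and the stated form holds only when $\QQ$ is diagonal (principal directions aligned with singular directions, or $\VS$ a multiple of the identity, or $r=1$).

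The paper's own proof does not close this gap: after deriving \eqref{grad:full_compact} it dismisses the norm estimates in one sentence (``follows due to the definition of $\QQ$ together with $\sin(2\theta)=2\sin\theta\cos\theta$''), which implicitly treats $\QQ$ as diagonal. Note also that the error propagates: the ``if'' direction of Corollary~\ref{cor:criticalpoints} fails at the same point above (all principal angles in $\{0,\pi/2\}$ but nonzero gradient --- indeed $\col(\VX)$ is not an invariant subspace of $\VM\VM^\top$), whereas the ``only if'' direction survives because it only needs your lower bound. Your plan of settling the diagonal case first and expecting a genuine loss in general was the right instinct; the conclusion is that the statement, not your argument, needs to be amended.
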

\begin{proof}
Rewriting the first derivative \eqref{grad:full} under the \setup, $\inp{\grad \ff(\VX\st)}{\dc}$ can be written as
\begin{align*}
     - \inp{  (\dc \VX\st^\top+ \VX\st \dc^\top) \VM }{ (\VI_m -\VX\st\VX\st^\top)\VM)  }\,.
\end{align*}
On the other hand, using the fact that $(\VI_m -\VX\st\VX\st^\top)\VX\st =\VZE$, the second term in the above expression vanishes: $\inp{  \VX\st \dc^\top \VM } {(\VI_m- \VX\st \VX\st^\top) \VM }= \inp{(\VI_m- \VX\st \VX\st^\top)\VX\st \dc^\top \VM}{   \VM  }= 0$. 
    Thus, after rearranging, we obtain the desired compact expression: 
    \begin{align*}
        \grad \ff(\VX\st)  &= -  (\VI_m- \VX\st \VX\st^\top) \VM\VM^\top \VX\st  \\
        &= -  (\VI_m- \VX\st \VX\st^\top) \VU\st \VQ^\top \VS^2 \VQ \VU\st^\top\VX\st \\
        &= - \dd \sin(\VTH)\,\,\QQ\,\cos (\VTH)\,.
    \end{align*}
Now,  the last part of the theorem follows due to the definition of $\QQ$  together with the basic trigonometric fact $\sin(2\theta) = 2 \sin(\theta) \cos (\theta)$.
\end{proof}

We remark that a similar calculation is done in \citep[\S III]{pitaval2015convergence}.
With Theorem~\ref{thm:gradient}, the following characterization of critical points is immediate.
 \begin{mdframed}[backgroundcolor=emph]
  \begin{corollary}[critical points] \label{cor:criticalpoints}   Assume  $\Omega=[m]\times[n]$.
  Let $[\VX]$ be a point on $\Gr(m,r)$, and $ \theta_1,\dots,\theta_r$ be the principal angles between $[\VX]$ and $[\VU]$.
  Then, $[\VX]$ is a critical point of $
  \ff$ if and only if  $\theta_i=0$ or $\pi/2$ $\forall i\in [r]$. 
 \end{corollary}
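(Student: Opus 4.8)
The plan is to derive the corollary directly from the compact gradient expression in Theorem~\ref{thm:gradient}, treating $[\VX]$ as a critical point precisely when $\grad \ff(\VX\st) = \VZE$ for the principal-alignment representative $\VX\st$. Since $\grad \ff(\VX\st) = -\dd \sin(\VTH)\,\QQ\,\cos(\VTH)$, the first step is to observe that $\dd$ is orthonormal, hence has full column rank, so $\grad \ff(\VX\st) = \VZE$ is equivalent to $\sin(\VTH)\,\QQ\,\cos(\VTH) = \VZE$ as an $r \times r$ matrix.

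Next I would exploit the fact that $\sin(\VTH)$ and $\cos(\VTH)$ are diagonal with nonnegative entries and that $\QQ = \VQ^\top \VS^2 \VQ$ is symmetric positive definite (because $\VS$ is invertible, as $\VM$ has rank $r$), with diagonal entries $\qq_i$ satisfying $\smin^2 \le \qq_i \le \smax^2 > 0$. The $(i,j)$ entry of $\sin(\VTH)\,\QQ\,\cos(\VTH)$ is $\sin(\theta_i)\,\QQ_{i,j}\,\cos(\theta_j)$; in particular its $(i,i)$ diagonal entry is $\sin(\theta_i)\,\qq_i\,\cos(\theta_i) = \tfrac12 \qq_i \sin(2\theta_i)$. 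Since $\qq_i > 0$, this vanishes if and only if $\sin(2\theta_i) = 0$, i.e. $\theta_i \in \{0, \pi/2\}$ (using $\theta_i \in [0,\pi/2]$). This already gives the ``only if'' direction: if the gradient is zero, then every diagonal entry vanishes, forcing each $\theta_i$ to be $0$ or $\pi/2$.

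For the ``if'' direction, suppose $\theta_i \in \{0,\pi/2\}$ for all $i$. Then for each index $i$, either $\sin(\theta_i) = 0$ or $\cos(\theta_i) = 0$, so the $i$-th row of $\sin(\VTH)\,\QQ$ is zero whenever $\sin(\theta_i)=0$, while the $j$-th column of $\QQ\,\cos(\VTH)$ is zero whenever $\cos(\theta_j)=0$; in either case every entry $\sin(\theta_i)\,\QQ_{i,j}\,\cos(\theta_j)$ of the product vanishes, so $\grad \ff(\VX\st) = \VZE$. Finally I would note that criticality is a property of the point $[\VX]$ on the Grassmannian, independent of the representative chosen, so working with $\VX\st$ loses no generality. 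I do not anticipate a serious obstacle here — the only point requiring a little care is justifying that $\QQ$ has strictly positive diagonal (equivalently that $\smin > 0$), which follows from the standing assumption that $\VM$ has rank exactly $r$; everything else is immediate from the diagonal structure and Theorem~\ref{thm:gradient}.
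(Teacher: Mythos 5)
Your ``only if'' direction is correct and is essentially the argument the paper has in mind: since $\dd$ has orthonormal columns, $\grad \ff(\VX\st)=\VZE$ forces $\sin(\VTH)\,\QQ\,\cos(\VTH)=\VZE$, and the $(i,i)$ entry $\tfrac12\qq_i\sin(2\theta_i)$ with $\qq_i\geq\smin^2>0$ then forces each $\theta_i\in\{0,\pi/2\}$. The ``if'' direction, however, has a genuine gap. The $(i,j)$ entry of $\sin(\VTH)\,\QQ\,\cos(\VTH)$ is $\sin(\theta_i)\,\QQ_{i,j}\,\cos(\theta_j)$, and your case split only annihilates it when $\sin(\theta_i)=0$ or $\cos(\theta_j)=0$. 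In the mixed case $\theta_i=\pi/2$, $\theta_j=0$ with $i\neq j$, the entry equals $\QQ_{i,j}$, and $\QQ=\VQ^\top\VS^2\VQ$ is positive definite but in general \emph{not diagonal}, so nothing in your argument makes that entry vanish. Your sentence ``in either case every entry of the product vanishes'' silently assumes the two cases are indexed by the same subscript; they are not.

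This is not a patchable slip: such points can genuinely fail to be critical. Take $m=3$, $r=2$, $\VU=[\ve_1\ \ve_2]$, $\VS=\diag(2,1)$, and let $\VX$ have columns $(\ve_1+\ve_2)/\sqrt{2}$ and $\ve_3$. Then $\VX^\top\VU$ has singular values $1$ and $0$, so $\VTH=\diag(0,\pi/2)$, yet the first column of $\grad\ff(\VX)=-(\VI_3-\VX\VX^\top)\VM\VM^\top\VX$ equals $-\tfrac{3}{2\sqrt{2}}(\ve_1-\ve_2)\neq\vze$. What is true is that criticality of such a point is equivalent to the additional condition $\QQ_{i,j}=0$ whenever $\theta_i=\pi/2$ and $\theta_j=0$, i.e.\ to the $\theta=0$ part of $\col(\VX)$ being an invariant subspace of $\VM\VM^\top$. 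The same off-diagonal terms are dropped in the upper bound $\fnorm{\grad\ff([\VX])}^2\leq\tfrac{\smax^4}{4}\fnorm{\sin(2\VTH)}^2$ of Theorem~\ref{thm:gradient}, which is the step the paper's own one-line justification of Corollary~\ref{cor:criticalpoints} leans on; so your proof inherits a problem already present in the statement rather than introducing a new one, but the ``if'' half as written does not go through.
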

 \end{mdframed}

 \subsection{Geometry based on Hessian} \label{sec:full_hessian}
 
 We first derive a compact expression for Hessian based on the principal angles.   

\begin{theorem}[characterization of the Hessian]  \label{hessian}
 Assume $\Omega=[m]\times[n]$. Let $[\VX]\in\Gr(m,r)$.
 With the \setup,  the second derivative $\hess \ff (\VX\st)[\dc,\dc]$  has the following compact expression: for any direction  $\dc \in \T{\VX\st}\Gr(m,r)$, 
  \begin{align} \label{hess:full_compact}
  \begin{split}
    &\Tr\left[  \cos (\VTH) \dc^\top \dc  \cos(\VTH) \QQ\right] - \Tr\left[ \sin (\VTH)  \dd^\top\dc  \dc^\top\dd \sin (\VTH) \QQ \right].  
  \end{split}
  \end{align}
\end{theorem}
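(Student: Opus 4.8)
The approach is a direct expansion of the general second-derivative formula \eqref{hess:full} under the \setup. First I would specialize to the fully observed case $\Omega=[m]\times[n]$, where all $\Omega$-inner products and $\Omega$-norms are the ordinary Frobenius ones (and, exactly as in the proof of Theorem~\ref{thm:gradient}, the factor $1/p$ is $1$), so that $\hess\ff(\VX\st)[\dc,\dc]$ is the sum of $-2\inp{\dc\dc^\top\VM}{(\VI_m-\VX\st\VX\st^\top)\VM}$ and $\fnorm{(\dc\VX\st^\top+\VX\st\dc^\top)\VM}^2$. Then I would collect the identities supplied by the \setup that drive every simplification: the tangency relations $\dc^\top\VX\st=\VZE$ and $\dd^\top\VX\st=\VZE$, the orthonormality $\VX\st^\top\VX\st=\dd^\top\dd=\VI_r$, the decomposition $\VU\st=\VX\st\cos(\VTH)+\dd\sin(\VTH)$ with $\VX\st^\top\VU\st=\cos(\VTH)$, and $\VM\VM^\top=\VU\st\QQ\VU\st^\top$ (which follows from $\VM=\VU\st\VQ^\top\VS\VV^\top$ and $\QQ=\VQ^\top\VS^2\VQ$).

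Next I would simplify the two terms separately. For the first term, tangency gives $\dc^\top(\VI_m-\VX\st\VX\st^\top)=\dc^\top$, so $\inp{\dc\dc^\top\VM}{(\VI_m-\VX\st\VX\st^\top)\VM}=\Tr[\dc^\top\VM\VM^\top\dc]$; substituting $\VM\VM^\top=\VU\st\QQ\VU\st^\top$ together with $\VU\st^\top\dc=\sin(\VTH)\dd^\top\dc$ and rearranging cyclically turns this into $\Tr[\sin(\VTH)\dd^\top\dc\,\dc^\top\dd\sin(\VTH)\QQ]$. For the second term, I would expand the square of the symmetric matrix $\dc\VX\st^\top+\VX\st\dc^\top$: the two cross products vanish because $\dc^\top\VX\st=\VZE$, and $\dc\VX\st^\top\VX\st\dc^\top$ collapses to $\dc\dc^\top$, leaving $(\dc\VX\st^\top+\VX\st\dc^\top)^2=\dc\dc^\top+\VX\st\dc^\top\dc\VX\st^\top$. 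Hence $\fnorm{(\dc\VX\st^\top+\VX\st\dc^\top)\VM}^2=\Tr[\dc^\top\VM\VM^\top\dc]+\Tr[\dc^\top\dc\,\VX\st^\top\VM\VM^\top\VX\st]$, and $\VX\st^\top\VM\VM^\top\VX\st=\cos(\VTH)\QQ\cos(\VTH)$ turns the last summand into $\Tr[\cos(\VTH)\dc^\top\dc\cos(\VTH)\QQ]$. Adding the two terms with their correct coefficients, the common contribution $\Tr[\dc^\top\VM\VM^\top\dc]$ appears with total weight $-2+1=-1$, which produces precisely \eqref{hess:full_compact}.

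I do not expect a genuine obstacle here: the argument is elementary trace algebra powered by the orthogonality relations of the \setup. The one point deserving care is exactly this partial cancellation — the ``$\dc\dc^\top$'' piece enters \eqref{hess:full} twice, once from the first term with weight $-2$ and once from the second with weight $+1$, and it is this imbalance that is responsible for the minus sign in front of the $\sin(\VTH)$-trace in the final expression. One should also apply the tangency condition $\dc^\top\VX\st=\VZE$ consistently when simplifying the projectors, since that is what reduces both terms to quantities involving only $\dc^\top\dd$ and $\dc^\top\dc$.
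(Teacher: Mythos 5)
Your proposal is correct and follows essentially the same route as the paper's proof: both expand the two terms of \eqref{hess:full}, use the tangency relations and the \setup{} identities $(\VI_m-\VX\st\VX\st^\top)\VM=\dd\sin(\VTH)\VQ^\top\VS\VV^\top$ and $\VX\st^\top\VM\VM^\top\VX\st=\cos(\VTH)\QQ\cos(\VTH)$, and observe that the $\Tr[\dc^\top\VM\VM^\top\dc]$ contribution enters with net weight $-2+1=-1$. The only cosmetic difference is that you expand the square of the symmetric matrix $\dc\VX\st^\top+\VX\st\dc^\top$ directly, whereas the paper expands the Frobenius norm into three cross terms; these are the same computation.
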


\begin{proof} 
Rewriting the second derivative \eqref{hess:full} under the \setup, 
\begin{align*} 
   &- 2\inp{  \dc \dc^\top \VM }{(\VI_m -\VX\st\VX\st^\top)\VM } +\fnorm{  (\dc \VX\st^\top+ \VX\st \dc^\top) \VM }^2.  
\end{align*}
We will simplify the above two terms one by one.

First, from the \setup, note that $(\VI_m -\VX\st \VX\st^\top)\VM =  \dd \sin (\VTH) \VQ^\top\VS \VV^\top$.
Hence, using the fact that $(\VI_m -\VX\st\VX\st^\top)\dc =\dc$, we have
\begin{align}
 \inp{  \dc \dc^\top \VM }{(\VI_m -\VX\st\VX\st^\top)\VM }  
&= \inp{  \dc \dc^\top  }{(\VI_m -\VX\st\VX\st^\top)\VM \VM^\top (\VI_m -\VX\st\VX\st^\top)} \nn  \\
&= \inp{  \dc \dc^\top }{\dd \sin (\VTH)\, \QQ \, \sin (\VTH) \dd^\top }\nn\\
&= \Tr\left[ \Big( \sin (\VTH)\, \dd^\top\dc \dc^\top\dd\, \sin (\VTH)\ \QQ  \Big)\right]\,. \label{trace:term} 
\end{align}
Hence, it follows that the first term is equal to
$-2 \Tr\left[ \Big( \sin (\VTH)\ \dd^\top\dc \dc^\top\dd\ \sin (\VTH)\, \QQ  \Big)\right]$.

Next we calculate the second term.
First, expanding out the Frobenius norm,  we obtain
\begin{align*}  
\fnorm{  \dc \VX\st^\top \VM }^2 +2\inp{\dc \VX\st^\top \VM}{\VX\st \dc^\top \VM} + \fnorm{ \VX\st \dc^\top \VM }^2 \,.
\end{align*}
Note that the middle term  is $0$ since 
$(\VI_m-\VX\st\VX\st^\top)\VX\st =\VZE$. 
Moreover,  one can easily check that the third term is equal to  \eqref{trace:term}:
\begin{align*}
    \fnorm{ \VX\st \dc^\top \VM }^2=\fnorm{   \dc^\top \VM }^2= \inp{ \dc \dc^\top \VM }{ \VM\VM^\top } =\eqref{trace:term}.
\end{align*}
Therefore, the three terms above is equal to
\begin{align*} \fnorm{  \dc \VX\st^\top \VM }^2 + \Tr\left[ \Big( \sin (\VTH)\, \dd^\top\dc \dc^\top\dd\, \sin (\VTH)\, \QQ  \Big)\right]\,.
\end{align*}
Now, let us calculate $\fnorm{  \dc \VX\st^\top \VM }^2$. Using the definition of $\QQ$, we have $\inp{\dc\VX\st^\top \VM}{\dc\VX\st^\top \VM}     =\inp{\VX\st\dc^\top \dc\VX\st^\top}{\VM\VM^\top} = \inp{\VX\st\dc^\top \dc\VX\st^\top}{\VU\st \QQ \VU\st^\top}  = \Tr\left[\cos (\VTH)\,\dc^\top \dc\, \cos(\VTH)  \,  \QQ  \right]$.

Combining the above calculations, we obtain  the desired expression.
\end{proof}

With the compact expression for Hessian, one can characterize the landscape of the cost function.
See Figure~\ref{fig:landscape} for illustrations of the landscape.

\begin{mdframed}[backgroundcolor=emph]
\begin{corollary}[landscape based on Hessian] \label{cor:landscape}
 Assume  $\Omega=[m]\times[n]$.
  Let $[\VX]\in\Gr(m,r)$, and $ \theta_1,\dots,\theta_r$ be the principal angles between $[\VX]$ and $[\VU]$. 
  \begin{enumerate}
      \item If $\theta_i \in [0,\pi/4]$ $\forall i\in[r]$, then $\hess \ff ([\VX])[\dc,\dc]$ is nonnegative   for any $\dc \in \T{[\VX]}\Gr(m,r)$.
      \item If $\theta_i >\frac{\pi}{4}$ for some $i\in[r]$, then there exists a direction $\dc \in \T{[\VX]}\Gr(m,r)$ along which the second derivative is negative.
  \end{enumerate}
\end{corollary}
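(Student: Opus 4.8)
The plan is to run everything through the compact Hessian identity of Theorem~\ref{hessian}, which under the \setup reads
\[
\hess\ff(\VX\st)[\dc,\dc]=\Tr\!\left[\cos(\VTH)\,\dc^\top\dc\,\cos(\VTH)\,\QQ\right]-\Tr\!\left[\sin(\VTH)\,\dd^\top\dc\,\dc^\top\dd\,\sin(\VTH)\,\QQ\right],
\]
and to exploit that $\QQ=\VQ^\top\VS^2\VQ$ is positive semidefinite while each of the two bracketed matrices is a congruence of a positive semidefinite matrix. Both assertions depend on $[\VX]$ only through its principal angles, so I may use principal-alignment representatives throughout.

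For part~2 I would exhibit an \emph{explicit} escaping direction. Fix an index $i$ with $\theta_i>\pi/4$ and set $\dc:=\dd\,\ve_i\ve_i^\top$, i.e.\ the tangent vector whose columns all vanish except the $i$-th, which equals the $i$-th column of $\dd$; it lies in $\T{\VX\st}\Gr(m,r)$ because $\VX\st^\top\dd=\VZE$. Then $\dc^\top\dc=\dd^\top\dc\,\dc^\top\dd=\ve_i\ve_i^\top$, so both traces collapse to a single diagonal entry and
\[
\hess\ff(\VX\st)[\dc,\dc]=\qq_i\left(\cos^2\theta_i-\sin^2\theta_i\right)=\qq_i\cos(2\theta_i)<0,
\]
using $\qq_i\ge\smin^2>0$ and $2\theta_i>\pi/2$. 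Conceptually this is the direction that rotates the $i$-th basis vector of $\col(\VX\st)$ toward the $i$-th principal vector of $\col(\VU\st)$, matching the rank-one picture in Figure~\ref{fig:illustration}.

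For part~1 I would first reduce to the range of $\dd$. Write $\VA:=\dd^\top\dc$ and $\VW:=(\VI_m-\dd\dd^\top)\dc$, so $\dc=\dd\VA+\VW$ with $\dd^\top\VW=\VZE$; hence $\dc^\top\dc=\VA^\top\VA+\VW^\top\VW$ and $\dd^\top\dc=\VA$. Since $\cos(\VTH)\VW^\top\VW\cos(\VTH)$ is positive semidefinite and $\QQ$ is too, the $\VW$-part only adds a nonnegative amount to the first trace (and with $\dc=\dd\VA$ the two sides agree, so nothing is lost), so it suffices to prove
\[
\Tr\!\left[\cos(\VTH)\VA^\top\VA\cos(\VTH)\QQ\right]\ \ge\ \Tr\!\left[\sin(\VTH)\VA\VA^\top\sin(\VTH)\QQ\right]\qquad\text{for every }\VA\in\R^{r\times r}.
\]
Setting $\tilde\VA:=\cos(\VTH)\VA^\top$ and $\hat\VA:=\sin(\VTH)\VA$, this is exactly $\Tr[\QQ\,\tilde\VA\tilde\VA^\top]\ge\Tr[\QQ\,\hat\VA\hat\VA^\top]$, which upon summing over the column index $k$ reads $\sum_k\mathbf{v}_k^\top\QQ\,\mathbf{v}_k\ge\sum_k\mathbf{w}_k^\top\QQ\,\mathbf{w}_k$ with $(\mathbf{v}_k)_p=\cos\theta_p\,\VA_{kp}$ and $(\mathbf{w}_k)_p=\sin\theta_p\,\VA_{pk}$. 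The hypothesis $\theta_i\le\pi/4$ enters as $\cos^2\theta_i\ge\tfrac12\ge\sin^2\theta_j$ for all $i,j$, which governs the ``diagonal'' part of the comparison.

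The step I expect to be the main obstacle is precisely this last inequality. The matrices $\tilde\VA\tilde\VA^\top$ and $\hat\VA\hat\VA^\top$ involve $\VA$ versus $\VA^\top$ \emph{asymmetrically}, so they are not comparable in the Loewner order and cannot simply be tested against $\QQ$. When $\QQ$ is a scalar multiple of $\VI_r$ (all nonzero singular values of $\VM$ equal) the difference becomes a positive multiple of $\sum_{k,p}(\cos^2\theta_p-\sin^2\theta_k)\,\VA_{kp}^2\ge 0$ and the bound is immediate; for a genuinely non-flat weight $\QQ$ the cross-terms demand a sharper treatment — diagonalizing $\QQ$ and tracking how its eigenspaces sit relative to $\cos^2(\VTH)-\sin^2(\VTH)$, and possibly replacing the threshold $\pi/4$ by one that degrades as $\smax/\smin$ grows. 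That is where I would concentrate the real work.
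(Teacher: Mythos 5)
Your part~2 is complete and is essentially the paper's own argument: the paper's choice $\dcn=\dd$ with $\phi_i=\pi/2$ and $\phi_j=0$ for $j\neq i$ is exactly your $\dc=\dd\,\ve_i\ve_i^\top$, and both yield $\hess\ff(\VX\st)[\dc,\dc]=\qq_i(\cos^2\theta_i-\sin^2\theta_i)<0$. For part~1, your reduction to $\Tr[\cos(\VTH)\VA^\top\VA\cos(\VTH)\QQ]\geq\Tr[\sin(\VTH)\VA\VA^\top\sin(\VTH)\QQ]$ with $\VA=\dd^\top\dc$ is also correct, and the obstacle you flagged is not a technicality you failed to overcome --- it is a genuine obstruction, and you were right not to paper over it. Testing on $\VA=\ve_k\ve_p^\top$ (i.e.\ $\dc$ whose $p$-th column is the $k$-th column of $\dd$), the difference collapses to $\cos^2(\theta_p)\,\qq_p-\sin^2(\theta_k)\,\qq_k$, and since the only a priori control is $\qq_i\in[\smin^2,\smax^2]$, this can be negative once $\smax^2>2\smin^2$. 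Concretely: take $m=n=4$, $r=2$, $\VM=\ve_1\ve_1^\top+2\,\ve_2\ve_2^\top$, and $\col(\VX)=\mathrm{span}\bigl(\ve_1,\tfrac{1}{\sqrt2}(\ve_2+\ve_3)\bigr)$, so that $\VTH=\diag(0,\pi/4)$ and both angles lie in $[0,\pi/4]$. Along the tangent direction $\dc=[\tfrac{1}{\sqrt2}(\ve_2-\ve_3),\ \vze]$ the cost on the corresponding geodesic equals $1-\tfrac12\sin^2 t$, whose second derivative at $t=0$ is $-1<0$ (the same value the compact Hessian formula gives, namely $\qq_1\cos^2\theta_1-\qq_2\sin^2\theta_2=1-2$). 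So part~1 of the corollary is false as stated for ill-conditioned $\VM$.

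The step where you ``expected the main obstacle'' is precisely where the paper's proof goes wrong: it asserts the Loewner inequality $\sin(\VTH)\,\dd^\top\dcn\,\sin^2(\VPH)\,\dcn^\top\dd\,\sin(\VTH)\preceq\tfrac12\sin^2(\VPH)$, which is false in general --- in the example above the left-hand side is $\diag(0,\tfrac12)$ while the right-hand side is $\diag(\tfrac12,0)$, and these are not comparable. Your suspicion that the threshold must degrade with $\smax/\smin$ is exactly right: your scalar-$\QQ$ argument extends to general $\QQ$ by bounding $\cos(\VTH)\QQ\cos(\VTH)\succeq\smin^2\cos^2(\theta_{\max})\VI_r$ and $\sin(\VTH)\QQ\sin(\VTH)\preceq\smax^2\sin^2(\theta_{\max})\VI_r$ and using $\Tr[\VA^\top\VA]=\Tr[\VA\VA^\top]$, which proves nonnegativity of the Hessian under the corrected hypothesis $\tan(\theta_{\max})\leq\smin/\smax$; this recovers the $\pi/4$ threshold only when $\smin=\smax$. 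In summary: your proposal correctly proves part~2, correctly reduces part~1 to the key inequality, and correctly diagnoses that the inequality (hence the stated corollary, and with it Corollary~\ref{cor:strongconvex}) holds only with a condition-number-dependent angle threshold; the remaining gap is in the statement, not in your argument.
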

\end{mdframed}
\begin{proof}
To parse the expression for the second derivative \eqref{hess:full_compact}, we rewrite the direction  $\dc\in\T{\VX\st}\Gr(m,r)$,
as $\dcn \sin \VPH$, where $\dcn$ is orthonormal, i.e., $\dcn^\top \dcn =\VI_r$ and $\VPH=\diag(\phi_1,\dots, \phi_r)\in \R^{r\times r}$ is a diagonal matrix with entries in $[0,\pi/2]$.
Then, the second derivative \eqref{hess:full_compact} becomes
\begin{align} \label{hess:full_angle}
\begin{split}
    &\Tr\left[  \sin^2(\VPH) \cos^2(\VTH) \QQ\right]  - \Tr\left[\sin (\VTH) \dd^\top\dcn \sin^2(\VPH) \dcn^\top\dd  \sin (\VTH)\QQ \right].
\end{split}
\end{align}
Having this expression, we consider the two cases in the statement separately:
\begin{enumerate}
    \item First suppose that $\theta_i \in [0,\pi/4]$ $\forall i\in[r]$.  
    Then, we have $\sin(\theta_i)   \leq 1/\sqrt{2} \leq  \cos(\theta_i)$ $\forall i\in[r]$.
From this, one can see that    \begin{align*}
        \sin (\VTH)\ \dd^\top\dcn\ \sin^2(\VPH)\ \dcn^\top\dd \ \sin (\VTH) \preceq 1/2   \sin^2(\VPH).
    \end{align*}
    On the other hand,
    since $\QQ$ is a PSD matrix, 
    \begin{align*}          \eqref{hess:full_angle} \geq \Tr[\sin^2(\VPH)\ (\cos^2(\VTH)-1/2\VI_r)\QQ]\geq 0.
    \end{align*}
     
    \item Next, suppose that  $\theta_i>\frac{\pi}{4}$ for some $i\in[r]$. Then, $\cos^2(\theta_i)-\sin^2(\theta_i) <0$.
    Now, choose $\dcn=\dd$ and $\VPH$ so that $\phi_i=\pi/2$ and $\phi_j=0$ for all $j\neq i$. With such a choice,  \eqref{hess:full_angle} becomes $\qq_i(\cos^2(\theta_i)-\sin^2(\theta_i)) <0$.
\end{enumerate}
This completes the proof of Corollary~\ref{cor:landscape}.
\end{proof}

\begin{figure} 
    \centering 
    \includegraphics[width=.6\columnwidth]{./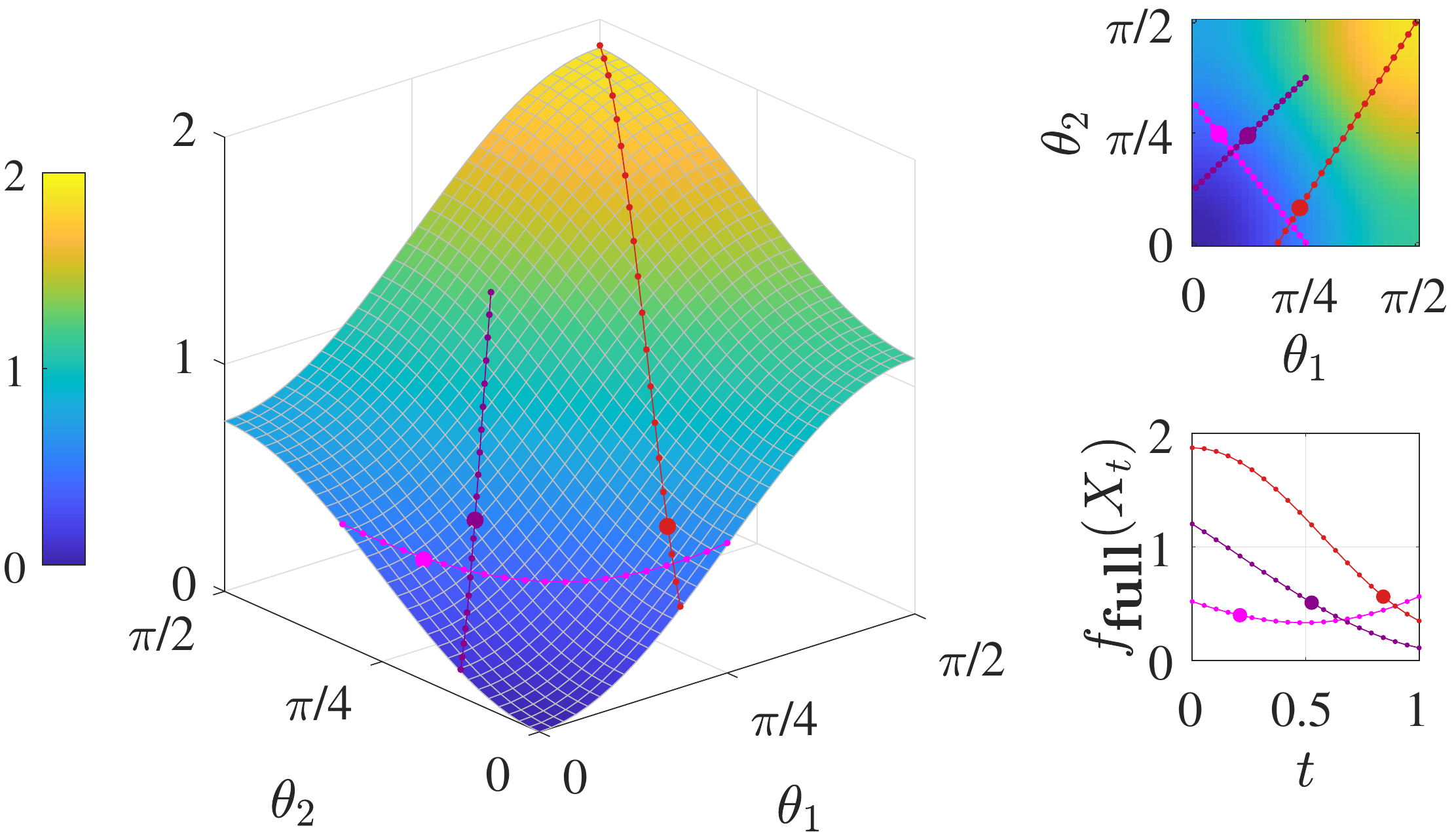}
    \caption{Illustrations of the landscape  of cost function $\ff$. We choose the same parameters as in Figure~\ref{fig:illustration}. We overlay three geodesics with bigger dots that represent the points where the geodesics  enter the region $\NN{[\VU]}{ \pi/4}$. }
    \label{fig:landscape}
\end{figure}

In fact, the proof reveals that if $\theta_1,\dots, \theta_r \in [0,\phi]$ for some $\phi<\pi/4$, the second derivative is greater than equal to $\min_{i\in[r]} [\qq_i\cdot (\cos^2(\phi)-\sin^2(\phi))]$.
This together with the geodesic convexity of $\NN{[\VU]}{\phi}$  (Lemma~\ref{lem:geodesic_convexity}) shows that the cost $\ff$ is strongly  geodesically-convex.

\begin{mdframed}[backgroundcolor=emph]
\begin{corollary}[geodesic convexity] \label{cor:strongconvex}
 Assume   $\Omega=[m]\times[n]$ and  $\phi\in [0,\pi/4)$.
 Following the notations from Lemma~\ref{lem:geodesic_convexity}, for any $[\VX], [\VY]\in \NN{[\VU]}{\phi}$, we have  $$
     \ff(\VY) \geq \ff(\VX) + \inp{\grad \ff(\VX)}{(\VI_m-\VX\VX^\top)\VY} + \frac{\mu}{2}\darc(\VX,\VY)^2,
     $$
     where   $\mu:= \smin \cdot (\cos^2(\phi)-\sin^2(\phi))$.
 In other words, $\ff$ is $\mu$-strongly geodesically-convex in  $\NN{[\VU]}{\phi}$.
  Moreover, $\ff$ is geodesically-convex in  $\NN{[\VU]}{\pi/4}$.
\end{corollary}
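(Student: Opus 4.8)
The plan is to establish the strong geodesic convexity inequality by combining the two structural facts already in hand: the geodesic convexity of the sublevel-type region $\NN{[\VU]}{\phi}$ from Lemma~\ref{lem:geodesic_convexity}, and the lower bound on the Hessian along directions based at points of $\NN{[\VU]}{\phi}$ that was extracted in the proof of Corollary~\ref{cor:landscape} (namely $\hess \ff([\VX])[\dc,\dc] \ge \min_i[\qq_i(\cos^2\phi - \sin^2\phi)]\cdot \fnorm{\dc}^2 \ge \mu \fnorm{\dc}^2$, using $\qq_i \ge \smin^2 \ge \smin$ — though I should double-check that the intended constant is $\smin^2$ rather than $\smin$, since $\qq_i \in [\smin^2,\smax^2]$; I will state it with whatever is consistent). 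The standard route is: for $[\VX],[\VY]\in\NN{[\VU]}{\phi}$, let $\VG\st(t)$, $t\in[0,1]$, be the unique shortest geodesic joining them, which by Lemma~\ref{lem:geodesic_convexity} stays inside $\NN{[\VU]}{\phi}$. Define $h(t) := \ff(\VG\st(t))$. Then $h$ is a smooth real function on $[0,1]$, with $h(0)=\ff(\VX)$, $h(1)=\ff(\VY)$, $h'(0) = \inp{\grad\ff(\VX)}{\VG\st{}'(0)}$, and — because $\VG\st$ is a geodesic, so $\ddot{\VG}\st$ is normal to the manifold — $h''(t) = \hess\ff(\VG\st(t))[\VG\st{}'(t),\VG\st{}'(t)]$.

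**Next I would** bound $h''(t)$ from below by $\mu\,\fnorm{\VG\st{}'(t)}^2$, using that $\VG\st(t)\in\NN{[\VU]}{\phi}$ for all $t$ and the Hessian lower bound above applied at the base point $\VG\st(t)$ with direction $\VG\st{}'(t)\in\T{\VG\st(t)}\Gr(m,r)$. Since $\VG\st$ is a geodesic parametrized so that $\VG\st(0)=\VX$, $\VG\st(1)=\VY$, it has constant speed $\fnorm{\VG\st{}'(t)} = \darc(\VX,\VY)$ for all $t$. Hence $h''(t) \ge \mu\,\darc(\VX,\VY)^2$ on $[0,1]$. A second-order Taylor expansion with integral remainder, $h(1) = h(0) + h'(0) + \int_0^1 (1-t)\,h''(t)\,dt$, then yields $\ff(\VY) \ge \ff(\VX) + \inp{\grad\ff(\VX)}{\VG\st{}'(0)} + \frac{\mu}{2}\darc(\VX,\VY)^2$. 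To match the statement's notation I would observe that $\VG\st{}'(0) = (\VI_m - \VX\VX^\top)\VY\cdot(\text{something})$; more precisely, using the explicit geodesic form \eqref{def:geodesic} one computes $\VG\st{}'(0) = \dd\st\,\VTH$, and separately $(\VI_m-\VX\st\VX\st^\top)\VY\st = \dd\st\sin(\VTH)$, so these differ and I should be careful — the cleanest fix is to note that in the regime $\phi<\pi/4$ all principal angles between $\VX$ and $\VY$ are small, the log map is well-defined, and $\VG\st{}'(0)$ is the Riemannian logarithm $\mathrm{Log}_{\VX}(\VY)$; the inequality as written presumably intends $(\VI_m-\VX\VX^\top)\VY$ as a convenient proxy for the log map, and I would either replace it with $\mathrm{Log}_{\VX}(\VY)$ or verify the first-order term agrees — but since $\grad\ff(\VX)=\VZE$ is not assumed, I need the genuine log map here. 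I will write the first-order term as $\inp{\grad\ff(\VX)}{\mathrm{Log}_{[\VX]}([\VY])}$ and remark it equals the stated expression up to the standard identification, or simply keep the paper's notation if the reader is meant to read $(\VI_m-\VX\VX^\top)\VY$ loosely.

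**For the "moreover" part** — geodesic convexity (without strong convexity, and without uniqueness of the minimizing geodesic) on the closed region $\NN{[\VU]}{\pi/4}$ — I would argue by a limiting/continuity argument: for $[\VX],[\VY]\in\NN{[\VU]}{\pi/4}$, by the Remark following Lemma~\ref{lem:geodesic_convexity} there is at least one shortest geodesic $\VG\st(t)$ joining them contained in $\NN{[\VU]}{\pi/4}$; along it, Corollary~\ref{cor:landscape}(1) (the case $\theta_i\in[0,\pi/4]$) gives $h''(t)\ge 0$, so $h$ is convex on $[0,1]$, which is exactly geodesic convexity. Alternatively, take $\phi_k\uparrow\pi/4$, apply the strong-convexity inequality in $\NN{[\VU]}{\phi_k}$ whenever both points lie in it, and pass to the limit; but the direct argument via $h''\ge 0$ on the boundary region is cleaner and avoids fuss about which points lie in which $\NN{[\VU]}{\phi_k}$.

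**The main obstacle** I anticipate is the bookkeeping around the first-order term and the log map: reconciling the paper's ad hoc expression $(\VI_m-\VX\VX^\top)\VY$ with the true $\mathrm{Log}_{[\VX]}([\VY]) = \dd\st\VTH_{\VX,\VY}$ (note the $\arctan$-type rescaling that distinguishes $\VTH$ from $\sin\VTH$), and making sure that the tangent-space inner product $\inp{\grad\ff(\VX)}{\cdot}$ is being evaluated on the correct vector so that the Taylor expansion is literally valid. Everything else — constant speed of geodesics, $h'' = \hess$ along geodesics, the Hessian lower bound, Taylor with remainder — is routine once Lemma~\ref{lem:geodesic_convexity} guarantees the geodesic never leaves the convex region where the Hessian bound holds. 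A secondary, minor point is confirming the precise value of $\mu$ (whether $\smin$ or $\smin^2$ times $\cos 2\phi$) is consistent with $\qq_i\in[\smin^2,\smax^2]$, which I will reconcile with the statement rather than rederive.
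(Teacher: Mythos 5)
Your argument is precisely the one the paper intends: the paper gives no separate proof of this corollary beyond the sentence preceding it, which asserts exactly your combination of the Hessian lower bound $\min_{i\in[r]}[\qq_i(\cos^2\phi-\sin^2\phi)]$ on $\NN{[\VU]}{\phi}$ with the geodesic convexity of that region from Lemma~\ref{lem:geodesic_convexity}, to be integrated along the constant-speed minimizing geodesic via Taylor's theorem with remainder as you do (and with the boundary case $\phi=\pi/4$ handled, as you suggest, through the existence of a contained minimizing geodesic guaranteed by the remark after that lemma). The two discrepancies you flag are genuine imprecisions in the paper's statement rather than gaps in your argument: since $\qq_i\ge\smin^2$, the constant delivered by the Hessian bound is $\smin^2(\cos^2\phi-\sin^2\phi)$, so the stated $\mu=\smin(\cos^2\phi-\sin^2\phi)$ is only implied when $\smin\ge 1$; and the first-order term produced by the Taylor expansion is $\inp{\grad \ff(\VX\st)}{\dc\st\VTH}$ with the Riemannian logarithm $\dc\st\VTH$ (here $\VTH$ and $\dc\st$ taken as in \eqref{exp:geodesic_setup} relative to $[\VY]$), which differs from the stated $(\VI_m-\VX\st\VX\st^\top)\VY\st=\dc\st\sin(\VTH)$ beyond first order in $\VTH$, so your instinct to phrase the inequality with the log map is the correct one.
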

\end{mdframed}

Next,   Corollaries~\ref{cor:criticalpoints} and \ref{cor:landscape} together conclude that  all critical points are strict saddle points.
In other words, there is no spurious local minima for $\ff$.
We formally write this conclusion below.
Note that this conclusion  is  consistent with the  previously results in the Euclidean domain~\citep{ge2016matrix,ge2017no}.

\begin{mdframed}
\begin{corollary}[escaping direction] \label{cor:escaping}
Assume  $\Omega=[m]\times[n]$.
Suppose that  $[\VX] \in \Gr(m,r)$  is a critical point of $\ff$, and let $\VTH=\diag(\theta_1,\dots,\theta_r)$ be the principal angle matrix between $[\VX]$ and $[\VU]$.
Then the following hold:
\begin{itemize}
    \item (Corollary~\ref{cor:criticalpoints}) Each $\theta_i$ is equal to either $0$ or $\pi/2$.
    \item (Corollary~\ref{cor:landscape}) With the \setup, the direction $\dcc\in \T{\VX\st}\Gr(m,r)$ defined as $\VU\st \sin( \VTH)$ (i.e., the $i$-th column of $\dcc$ equals that of $\VU\st$ if $\theta_i=\pi/2$ and $\vze$ if $\theta_i=0$) satisfies $\hess \ff (\VX\st)[\dcc,\dcc] = -\Tr(\sin^2(\VTH)\QQ)<0$. 
\end{itemize} 
\end{corollary}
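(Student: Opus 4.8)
The first bullet is nothing more than a restatement of Corollary~\ref{cor:criticalpoints}, so the plan is to prove the second bullet, which I would do in three short steps: first check that $\dcc:=\VU\st\sin(\VTH)$ really is a tangent vector at $\VX\st$, then substitute it into the compact Hessian formula \eqref{hess:full_compact} from Theorem~\ref{hessian}, and finally collapse the resulting traces using the constraint $\theta_i\in\{0,\pi/2\}$ furnished by criticality.

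For tangency: under the \setup\ we have $\VX\st^\top\VU\st=\cos(\VTH)$, so $\VX\st^\top\dcc=\cos(\VTH)\sin(\VTH)$, and because $[\VX]$ is a critical point Corollary~\ref{cor:criticalpoints} forces $\theta_i\in\{0,\pi/2\}$ for every $i$, whence $\cos(\theta_i)\sin(\theta_i)=0$ and $\VX\st^\top\dcc=\VZE$. Thus $\dcc\in\T{\VX\st}\Gr(m,r)$, and expanding $\VU\st\sin(\VTH)$ column by column under $\theta_i\in\{0,\pi/2\}$ recovers the description in the statement (the $i$-th column of $\dcc$ is that of $\VU\st$ if $\theta_i=\pi/2$ and $\vze$ if $\theta_i=0$). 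I would also record two immediate consequences of the \setup\ that will be needed: $\VU\st^\top\VU\st=\VI_r$, and, from the decomposition $\VU\st=\VX\st\cos(\VTH)+\dd\sin(\VTH)$ together with $\dd^\top\VX\st=\VZE$, the identity $\dd^\top\VU\st=\sin(\VTH)$.

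Substituting $\dcc=\VU\st\sin(\VTH)$ into \eqref{hess:full_compact}: since $\dcc^\top\dcc=\sin^2(\VTH)$, the first trace is $\Tr[\cos^2(\VTH)\sin^2(\VTH)\QQ]$, which is $0$ because $\cos^2(\theta_i)\sin^2(\theta_i)=0$ for each $i$; and since $\dd^\top\dcc=\sin^2(\VTH)$, the second trace is $\Tr[\sin^6(\VTH)\QQ]$, which equals $\Tr[\sin^2(\VTH)\QQ]$ because $\sin(\theta_i)\in\{0,1\}$. Hence $\hess\ff(\VX\st)[\dcc,\dcc]=-\Tr[\sin^2(\VTH)\QQ]$. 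For strictness I would note that unless $[\VX]=[\VU]$ is the global minimum (where all $\theta_i=0$, $\dcc=\VZE$, and both sides vanish), some $\theta_i=\pi/2$, and then $\sin^2(\theta_i)=1$ combined with $\qq_i\geq\smin^2>0$ gives $\Tr[\sin^2(\VTH)\QQ]\geq\smin^2>0$; this is exactly the sense in which every non-optimal critical point is a strict saddle. Alternatively, one can observe that $\dcc$ is precisely the direction obtained in the parametrization $\dc=\dcn\sin(\VPH)$ used in the proof of Corollary~\ref{cor:landscape} upon taking $\dcn=\dd$ and $\VPH=\VTH$, and then quote the computation there.

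I do not expect a genuine obstacle: every step is a one-line trigonometric identity once the principal angles are pinned to $\{0,\pi/2\}$. The two things that need attention are to invoke Corollary~\ref{cor:criticalpoints} \emph{before} any algebra — both the tangency of $\dcc$ and the vanishing of the first Hessian trace depend on it — and to keep in mind that the strict negativity claimed in the statement is the assertion for critical points other than the global optimum, at which $\dcc=\VZE$.
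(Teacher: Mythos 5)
Your proposal is correct and follows essentially the same route as the paper: both reduce to the compact Hessian expression of Theorem~\ref{hessian} (the paper via its angle-parametrized form with $\dcn=\dd$, $\VPH=\VTH$, you by direct substitution of $\dcc=\VU\st\sin(\VTH)$ using $\dd^\top\VU\st=\sin(\VTH)$), and both use $\theta_i\in\{0,\pi/2\}$ to identify $\dd\sin(\VTH)$ with $\VU\st\sin(\VTH)$ and collapse the traces. Your explicit tangency check and your remark that at the global minimum $\dcc=\VZE$ and the quantity is $0$ rather than strictly negative are small but worthwhile refinements the paper leaves implicit.
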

\end{mdframed} 
\begin{proof} 
The first statement is the restatement of Corollary~\ref{cor:criticalpoints}.
For the second statement, first note from \eqref{hess:full_angle} that $\hess \ff ([\VX])[\dc\st\sin(\VTH),\dc\st\sin(\VTH)]$ is equal to
\begin{align*}
     \Tr \left[  \Big(\sin^2(\VTH) (\cos^2(\VTH) 
 - \sin^2 (\VTH))\Big)\QQ  \right]
 =-\Tr \left[  \sin^2(\VTH)\QQ  \right],
\end{align*}
where the equality follows from the fact that each $\theta_i$ is equal to either $0$ or $\pi/2$.
From the \setup, $\dd \sin (\VTH)= (\VI_m-\VX\st\VX\st)^\top\VU\st  =\VU\st \sin (\VTH)$, where again the equality holds since each $\theta_i$ is equal to either $0$ or $\pi/2$. Hence, in fact $\dd \sin (\VTH) =\dcc$.
%From \eqref{hess:full_angle}, $\hess \ff ([\VX])[\dc\st\sin(\VTH),\dc\st\sin(\VTH)]$ is equal to $-\Tr \left[  \sin^2(\VTH)\QQ  \right]$ since each $\theta_i$ is equal to either $0$ or $\pi/2$.  Since each $\theta_i$ is equal to either $0$ or $\pi/2$, $\dd \sin (\VTH)= (\VI_m-\VX\st\VX\st)^\top\VU\st  =\VU\st \sin (\VTH)$. Hence,  $\dd \sin (\VTH) =\dcc$.
\end{proof}

 \section{Geometry of the partially observed  case} \label{sec:partial}
  
In this section, we empirically study the geometry of the partially observed case based on our findings in Section~\ref{sec:landscape}.
For simplicity, we focus on  a simple \emph{uniform} observation model where each entry of $\VM$ is observed independently with probability $p \in (0,1]$. 
%Also we assume that the ground truth satisfies the standard assumption required for the partially observed case, namely the \emph{incoherence} condition~\citep{candes2009exact}:    $\VM$ is $\mu$-incoherent if its  singular value decomposition $\VU \VS \VV^\top$ satisfies  \begin{align*}       \max_{i\in[m]}\norm{\ve_i^\top \VU }_2 \leq \sqrt{\mu r/m},\quad \max_{j\in[n]}\norm{\ve_j^\top \VV } \leq \sqrt{\mu r/n}.  \end{align*} 
  
  We recall the formulation for the partially observed case.
\begin{align} \label{cost:formulation} 
    \mini_{\VX\in\R^{m\times r},~\VX^\top\VX=\VI } \Big[ \fp(\VX)&:= \frac{1}{p}\cdot \min_{\VY\in \R^{n\times r}}\gp(\VX, \VY)\Big],\\
 \text{where}~~   \gp(\VX,\VY)&:= \frac{1}{2}\pnorm{\VX\VY^\top -\VM}^2. \nn
\end{align}
In fact, we multiply the cost by $1/p$ for a correct scale.
We first empirically study the landscape of this formulation. Later in this section,  we discuss some variants of this formulation considered in previous works.

\subsection{Landscape simulations}

\paragraph{Settings.}  We   follow the setups considered in \citep[\S5]{boumal2015low}. 
For all setups, orthonormal matrices 
$\VU\in \R^{m\times r}$ and $\VV\in \R^{n\times r}$ are generated uniformly at random.
\begin{itemize}
    
     \item \emph{Setting 1: rectangular matrices.} We set $m=1000$, $n=30\,000,~r=6$ and $\VS = \diag(1,1+\tfrac{1}{5},\cdots,1+\tfrac{4}{5},2)$.  
     
    \item \emph{Setting 2: high dimension.} We set $m=n= 10\,000,~r=10$ and   $\VS = \diag(1,1+\tfrac{1}{9}, \dots, 1+\tfrac{8}{9}, 2 )$.

    \item \emph{Setting 3: bad conditioning.} 
    We set $m=n=1000,~r=10$. To make the ground truth matrix ill-conditioned, we set $\VS = \sqrt{mn}\cdot  \diag(1,e^{5/9}, e^{10/9},\dots,  e^{40/9}, e^{5})$. 
   
\end{itemize}

\begin{figure}
    \centering  
    \includegraphics[width=.49\columnwidth]{./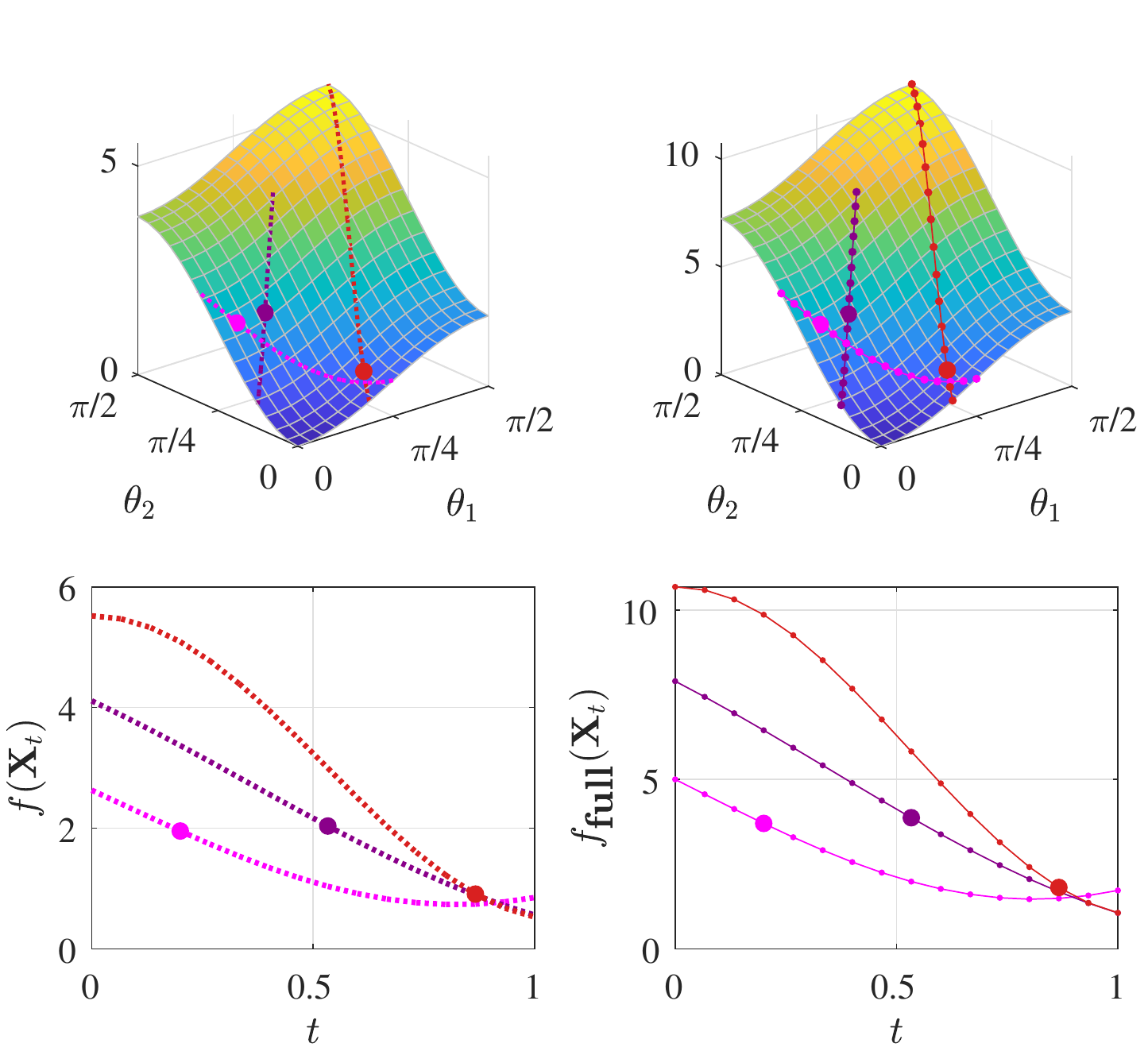} 
    \includegraphics[width=.49\columnwidth]{./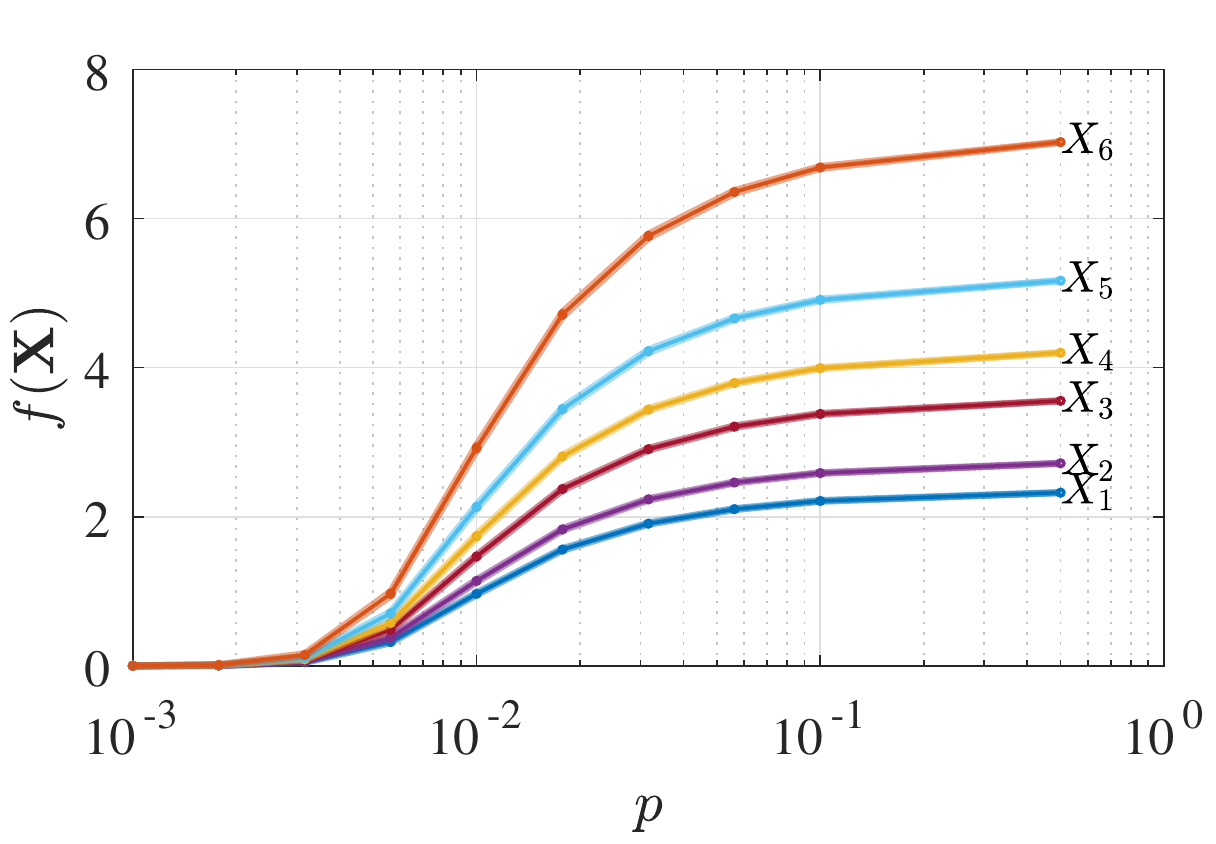} 
    \includegraphics[width=.49\columnwidth]{./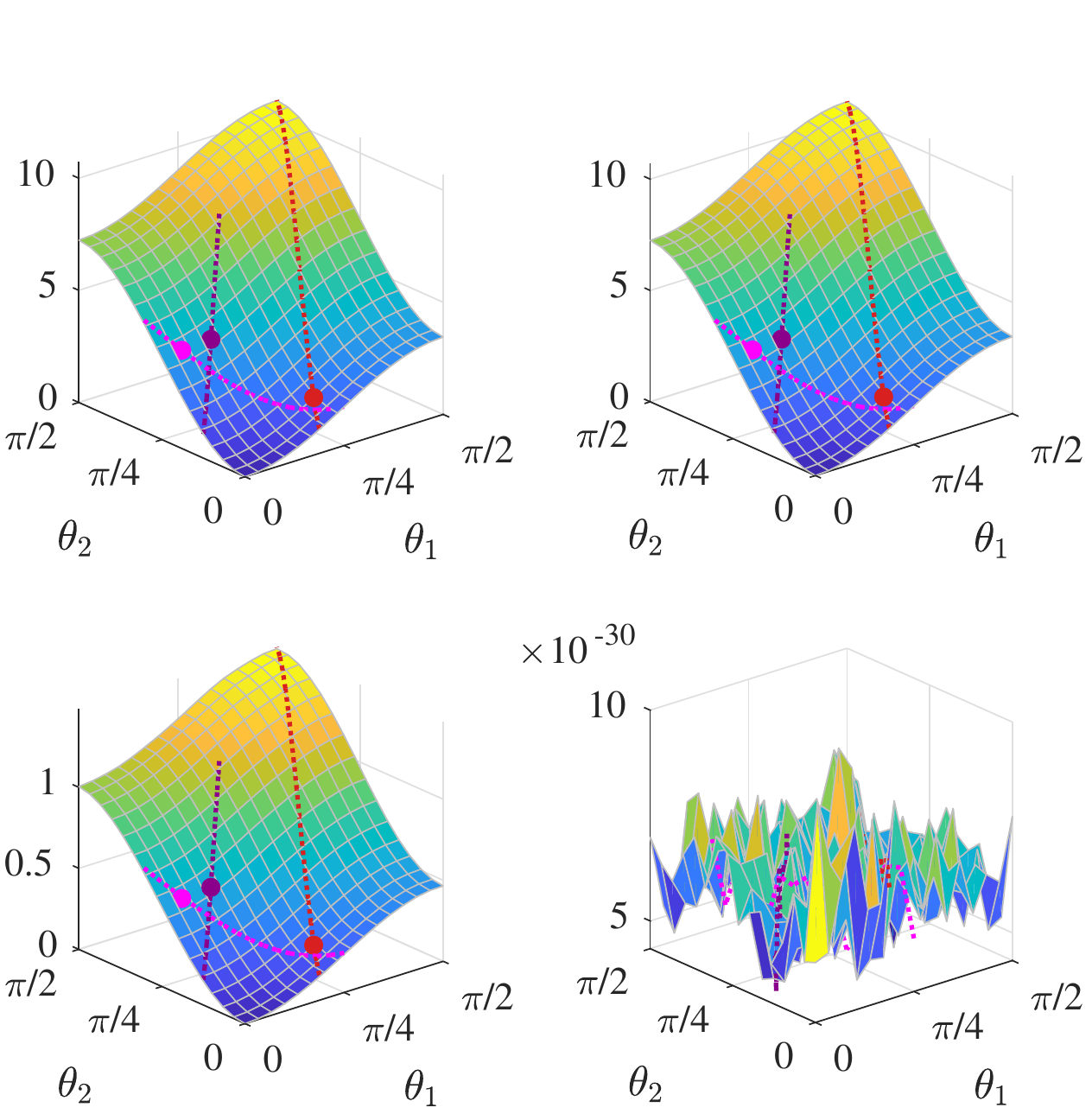}
    \caption{Simulation results for Setting 1. (Top-left) Results for Experiment 1.   Note that the landscape ($p=0.026$) is very similar to the fully observed case ($p=1$). (Top-right) Results for Experiment 2. We depict the error bars with shades around each curve.
    (Bottom) Results for Experiment 3.  Note that for  $p=0.1,~0.01$ the landscapes are quite similar to the fully observed case ($p=1$).   }
    \label{fig:sim:sc1}
\end{figure}

\begin{figure}
    \centering  
    \includegraphics[width=.49\columnwidth]{./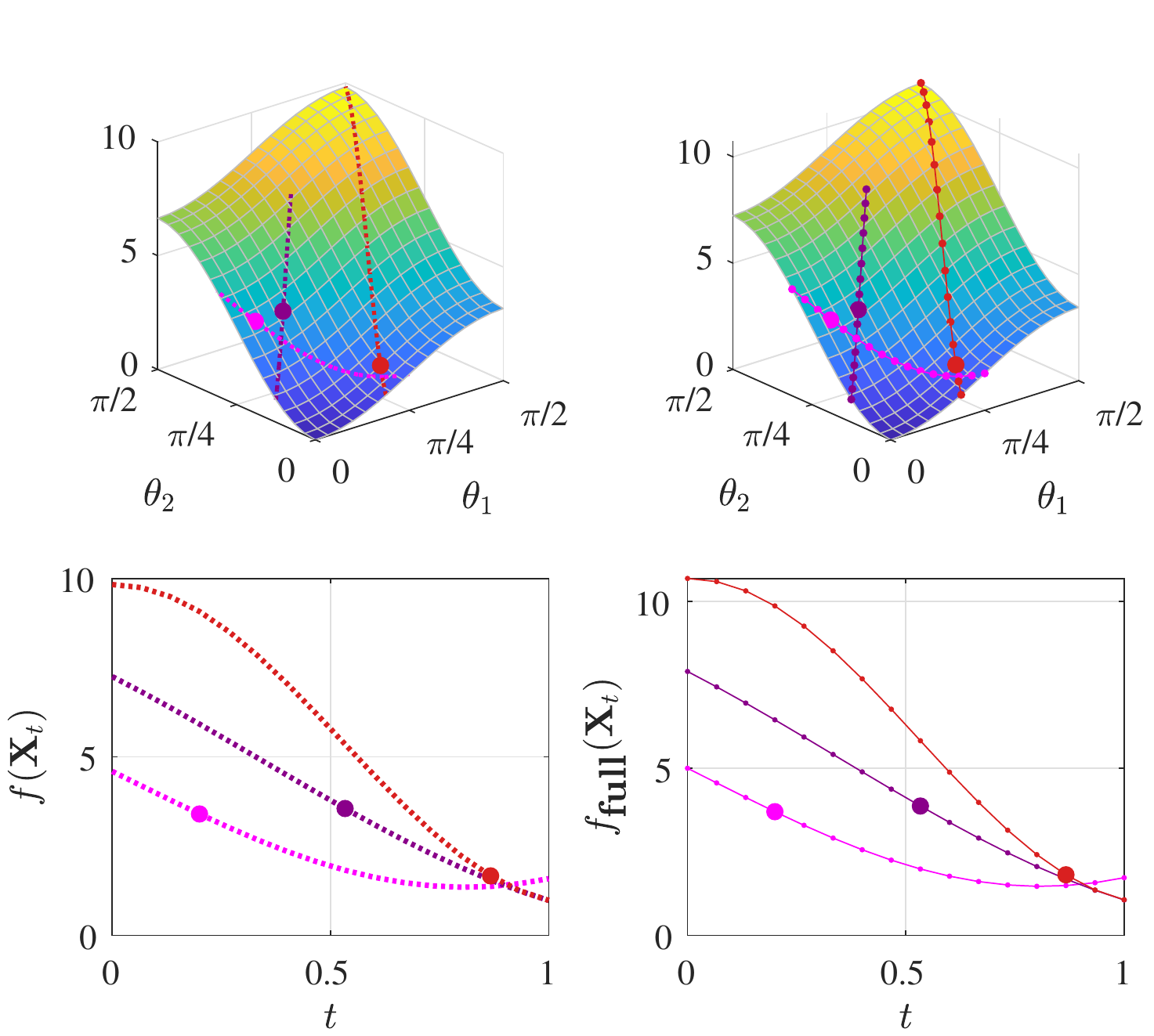} 
    \includegraphics[width=.49\columnwidth]{./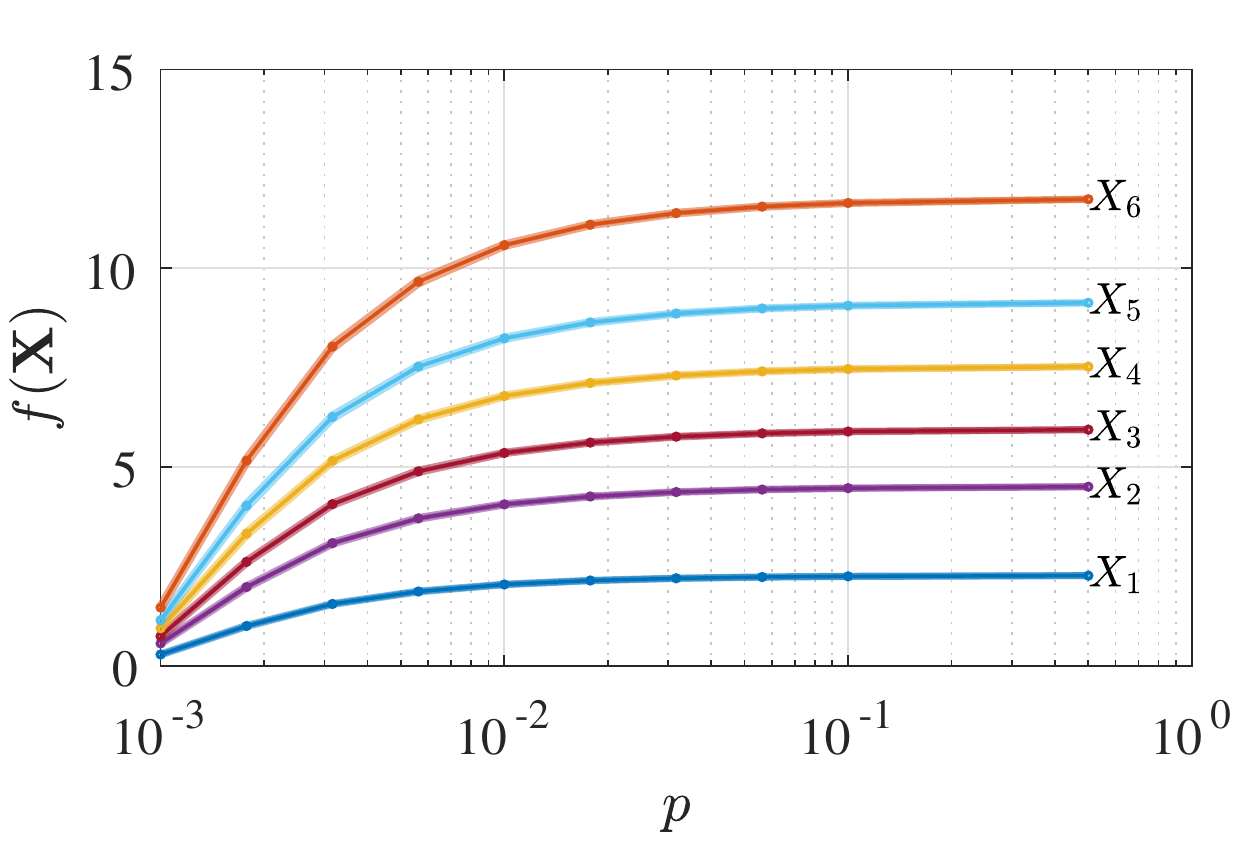} 
    \includegraphics[width=.49\columnwidth]{./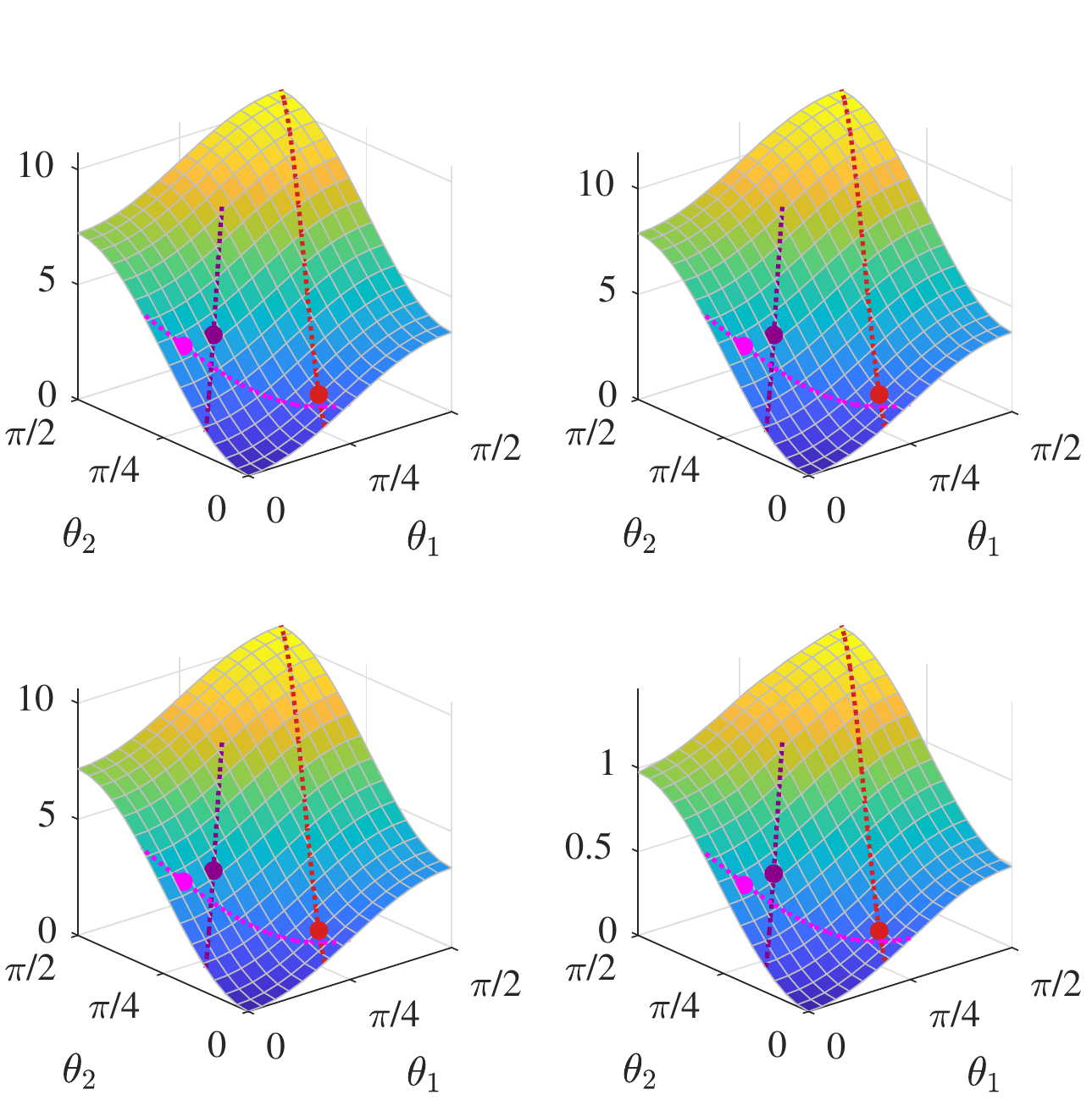}
    \caption{Simulation results for Setting 2. (Top-left) Results for Experiment 1.   Note that the landscape ($p=0.006$) is very similar to the fully observed case ($p=1$). (Top-right) Results for Experiment 2. We depict the error bars with shades around each curve.
    (Bottom) Results for Experiment 3.  Note that the landscapes are quite similar to the fully observed case ($p=1$). }
    \label{fig:sim:sc2}
\end{figure}

\begin{figure}
    \centering  
    \includegraphics[width=.49\columnwidth]{./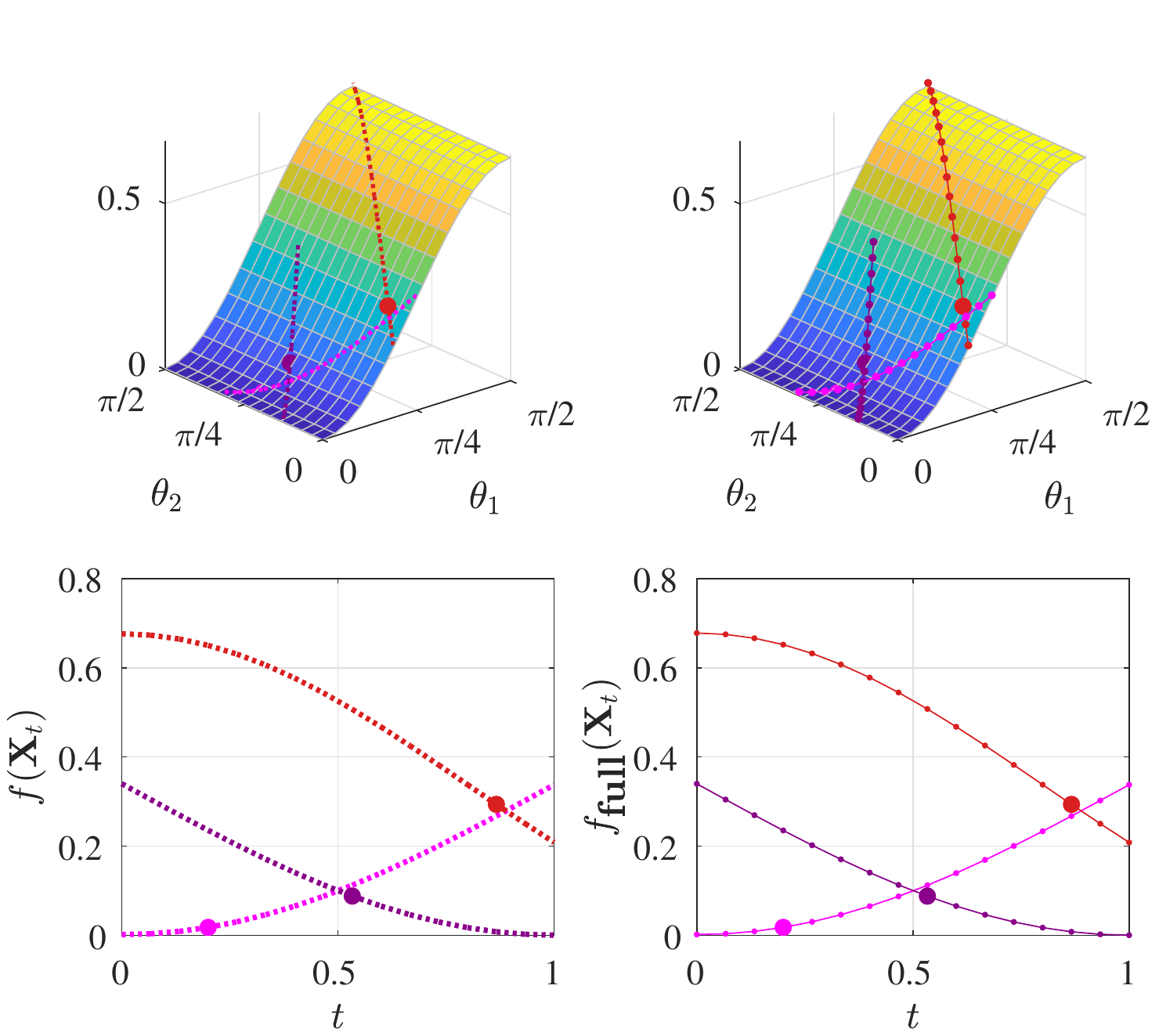} 
    \includegraphics[width=.49\columnwidth]{./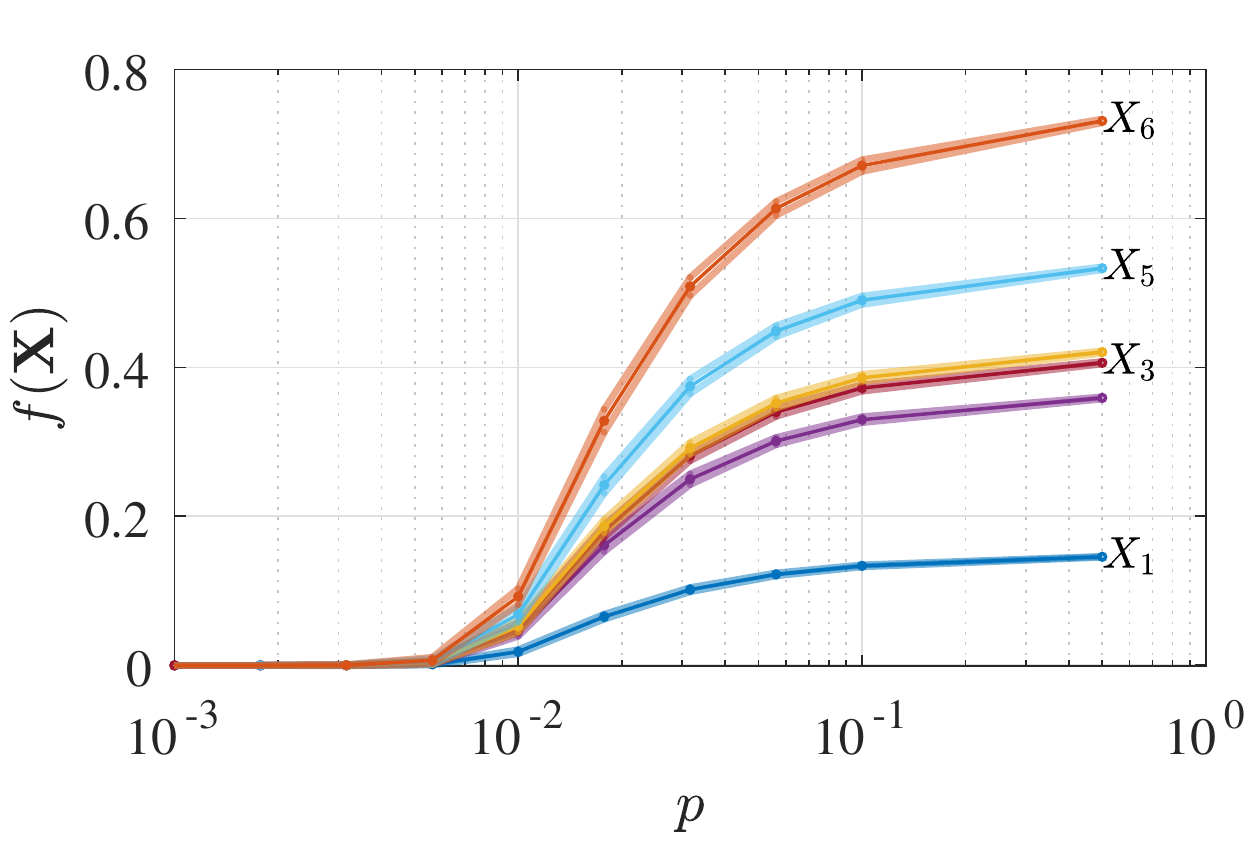} 
    \includegraphics[width=.49\columnwidth]{./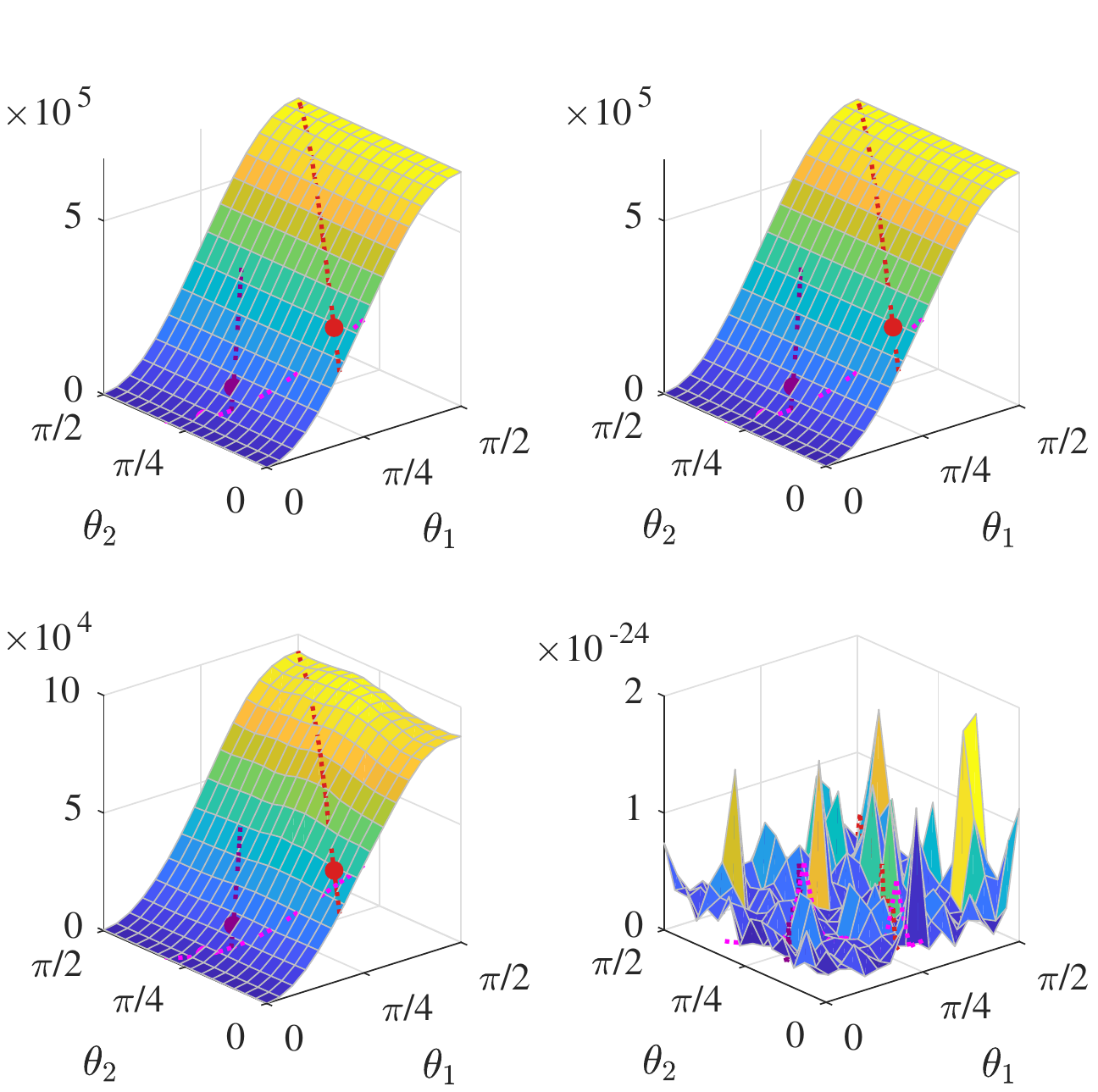}
    \caption{Simulation results for Setting 3.  (Top-left) Results for Experiment 1.   Note that the landscape ($p=0.1$) is very similar to the fully observed case ($p=1$). (Top-right) Results for Experiment 2. We depict the error bars with shades around each curve.
    (Bottom) Results for Experiment 3.  Note that for  $p=0.1,~0.01$ the landscapes are quite similar to the fully observed case ($p=1$). }
    \label{fig:sim:sc3}
\end{figure}
 
\paragraph{Simulations.}
Under the above scenarios, we run the following experiments. See Figures~\ref{fig:sim:sc1}, \ref{fig:sim:sc2} and \ref{fig:sim:sc3} for the results.
\begin{enumerate}
    \item \emph{Landscapes for the settings in \citep{boumal2015low}.}
    We first illustrate the landscape of $\fp$ for the choice of sampling probabilities in \citep[\S5]{boumal2015low}: $p=0.026$ for Setting 1,  $p=0.006$ for Setting 2, and  $p=0.01$ for Setting 3. We compare them with the $p=1$ case. As one can see from the top-left plot of Figures~\ref{fig:sim:sc1}, \ref{fig:sim:sc2} and \ref{fig:sim:sc3}, the landscapes are very similar to  the fully observed case.
    
    \item \emph{Evolution of cost for varying $p$.} 
    We next plot the evolution of the expected cost value $\E\fp(\VX)$  as we vary the sampling probability $p$. For each setting, we generate  six  points $\{\VX_1,\dots, \VX_6\}$ randomly from $\Gr(m,r)$ whose principal angles to the ground truth matrix all lie between $\tfrac{\pi}{4}$ and $\tfrac{3\pi}{4}$.
   Then for each point $\VX_i$, we  compute $\E\fp(\VX_i)$ as we  vary the sampling probability $p$.
   To compute the expected cost value, we run  $100$ independent trials and  compute the average. 
   We also depict the error bars with shades.
   As one can see from  the top-right plot of Figures~\ref{fig:sim:sc1}, \ref{fig:sim:sc2} and \ref{fig:sim:sc3},  the cost values decrease while keeping the relative ratio similar as $p$ decreases. Also, notice that the cost values are well concentrated around their averages (the error bar shades are only noticeable for Setting 3).

   \item \emph{Comparison of landscape for varying $p$.} 
 For each setting, we compare the landscape of $\fp$ for sampling probabilities $p=1,~0.1,~0.01,~0.001$.
 As one can see from  the bottom plot of Figures~\ref{fig:sim:sc1}, \ref{fig:sim:sc2} and \ref{fig:sim:sc3},  the landscapes for $p=0.1,0.01$ are almost identical to those for $p=1$.
 Experiment 3 demonstrates that the landscapes of the partially observed case are greatly similar to those of the fully observed case unless $p$ is too small.
\end{enumerate}

\subsection{Other formulations}

In this section, we discuss other formulations considered in previous works~\citep{dai2012geometric,boumal2015low}.
First, we note that the cost function \eqref{cost:formulation} could become discontinuous in general.
To formally see this, let us revisit  \citep[Example 1]{dai2012geometric}.

\begin{example} Consider a toy example where the ground truth matrix $\VM$ is equal to $[0, \ 1, \ 1 ]^\top$ and $\pp = \{2,3\}$, i.e., the two entries of value $1$ are observed.
Now let us consider the cost at the point $\vx = [\sqrt{1-2\eps^2}, \ \eps, \ \eps ]^\top$ for $\eps\in [0,1/\sqrt{2}]$.
From \eqref{cost:formulation},  
\begin{align*}
    \fp(\vu)  &= \frac{3}{4}\cdot  \min_{y\in \R } \pnorm{y \vx   -[0, \ 1, \ 1 ]^\top}^2 = \frac{3}{2}\cdot\min_{y\in \R } (1-\eps y)^2\\ 
    &  =\begin{cases} 0 &\text{if  }\eps\in (0,1/\sqrt{2}],\\
    3/2 &\text{if }\eps=0.\end{cases}
\end{align*}
Hence, one can see that $\fp$ for this toy example is discontinuous at $\vx =[1, \ 0, \ 0 ]^\top$. \end{example}
\noindent To remedy this discontinuity issue, \citep{dai2012geometric} consider a different formulation based on the chordal distance, and \citep{boumal2015low} add a regularization term to the cost function.
These modifications are  indeed shown to be useful for real applications (see  \citep[\S6]{boumal2015low} or \citep[\S 3.4]{keshavan2009gradient}). 
    
\section{Discussion}
\label{sec:discussion}

We conclude this paper with several relevant open questions. 

\paragraph{Extension to the partially observed case.} 
We have seen in Section~\ref{sec:partial} that the landscapes for the partially observed case are greatly similar to those for the fully observed case unless $p$ is too small.
Hence, one future direction is to extend our theoretical results  in Section~\ref{sec:landscape} to the partially observed case.

\paragraph{Optimization aspect.} 
   Our main results  characterize  the Grassmannian landscape of \eqref{cost:formulation_full}. 
   It is then natural to ask the optimization aspect of our findings.
   Since our landscape analysis reveals the (strong) geodesic convexity of the cost within the basin $\NN{[\VU]}{\pi/4}$,    gradient methods converges fast within the basin.
   It would be then interesting to see how the gradient methods (or other optimization methods) behave outside the basin. 
  Our preliminary experiment on the trajectories of Riemannian gradient descent is reported in Figure~\ref{fig:optimization}. 
   Also, investigating other conditions for fast optimization  (e.g.,
   Polyak-{\L}ojasiewicz (P{\L}) inequality) would be an interesting direction.
   Lastly, whether one could develop a new optimization method based on  our results is also an intriguing direction to pursue.

\begin{figure}
    \centering 
    \includegraphics[width=.4\paperwidth]{./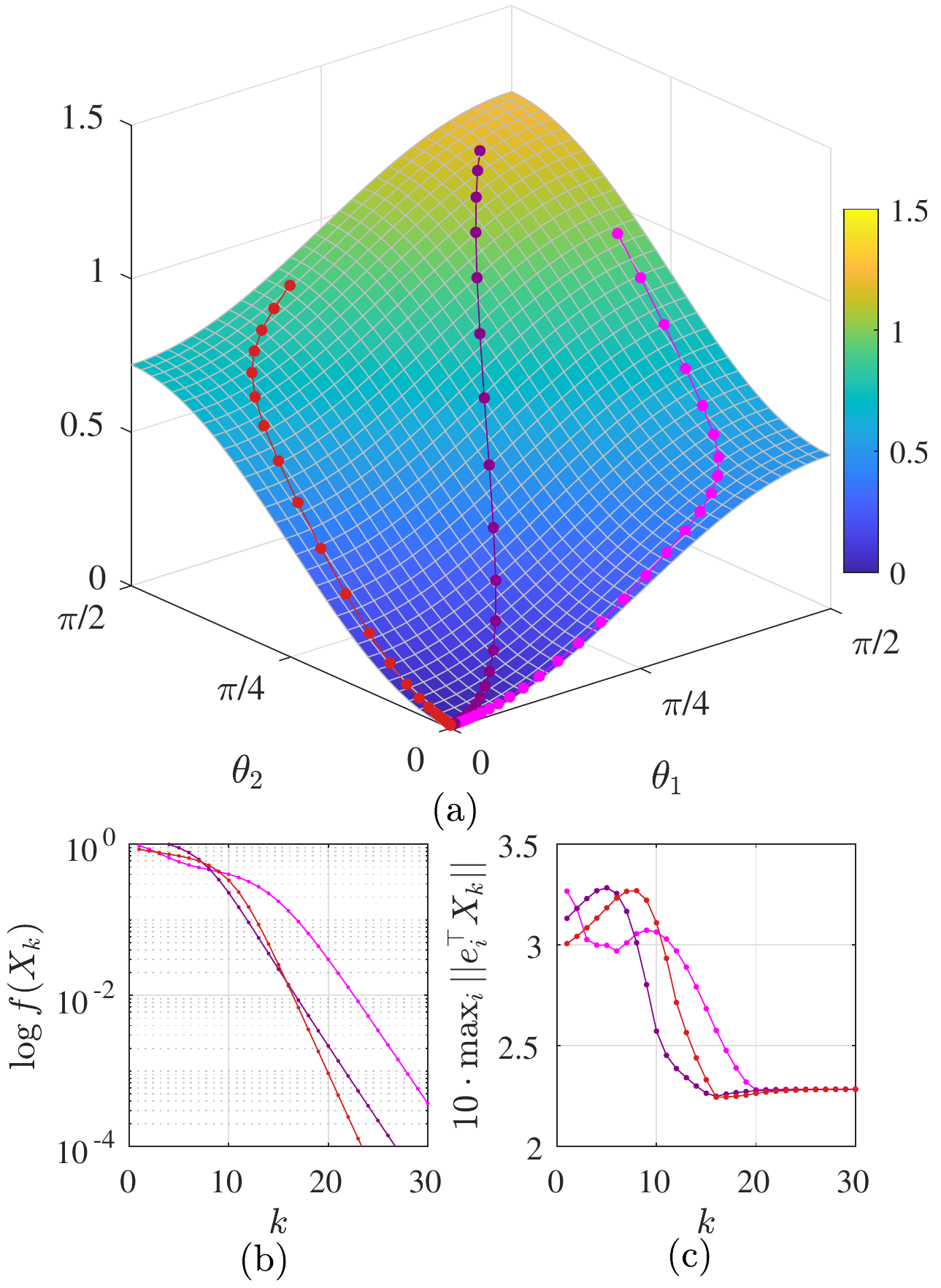}
    \caption{(a) Illustration of trajectories  of Riemannian gradient descents on  $\ff$.
    We choose the ground truth matrix to be a $100\times 200$ matrix of  rank $2$. The step-size is fixed $\eta=0.1$. (b) Objective values along the trajectories in log-scale. (c) The incoherence parameters along the trajectories.}
    \label{fig:optimization}
\end{figure}

\paragraph{Geometry of related problems.}   It is natural to ask whether one could characterize similar results for other related problems.
  In fact, the main observation in Ge et al.~\citep{ge2017no} is that their analysis for matrix completion also applies to other low rank recovery problems like matrix sensing and robust PCA.
  Based on this, we suspect that  similar landscape results could be characterized for these two problems.
  Another possibility is to study  robust subspace recovery~\citep{maunu2019well} in light of Remark~\ref{rmk:subspace}.

\paragraph{Formulations based on other distances.} 
    In Section~\ref{sec:projection}, we interpreted \eqref{cost:formulation_full} as a weighted version of the projection distance.
    Given this interpretation, it would be interesting to see whether matrix completion can be formulated using  other notions of distances between subspaces~\citep[\S4.3]{edelman1998geometry}.
    Would other distances lead to a better formulation? We note that this question is already partially explored by  \cite{dai2012geometric} where they consider the chordal distance for the purpose of \emph{consistent} matrix completion.
    
\section*{Acknowledgement}

We thank Suvrit Sra for stimulating discussions, especially regarding the geodesic convexity results (Lemma~\ref{lem:geodesic_convexity} and Corollary~\ref{cor:strongconvex}). We thank Tyler Maunu for pointing us to several related works. We thank Sinho Chewi for discussion regarding Remark~\ref{rmk:curvature} and also for various comments and suggestions on the manuscript. 
We thank Chen Lu for critical comments regarding the formulation.

\bibliographystyle{apalike} 
\bibliography{ref}

 \end{document}